\title{{\bf A sufficient condition for the existence of an anti-directed 2-factor in a directed graph}}
\author{Ajit A. Diwan\\
Department of Computer Science and Engineering\\
Indian Institute of Technology\\
Mumbai, India\\
\\
Josh B. Frye\\
Department of Mathematics\\
Illinois State University \\
Normal, IL 61790-4520\\
Email: jbfrye@ilstu.edu\\
\\
Michael J. Plantholt \\
Department of Mathematics\\
Illinois State University \\
Normal, IL 61790-4520\\
E-mail: mikep@ilstu.edu\\
\\
Shailesh K. Tipnis\\
Department of Mathematics\\
Illinois State University \\
Normal, IL 61790-4520\\
Email: tipnis@ilstu.edu\\
}
\begin{document}
\newtheorem{theorem}{Theorem}
\newtheorem{lemma}{Lemma}
\newtheorem{corollary}{Corollary}
\newenvironment{proof}{{\bf Proof}.}{\hspace{3mm}\rule{3mm}{3mm}}
\newenvironment{mainproof}{{\bf Proof of Theorem 7}.}{\hspace{3mm}\rule{3mm}{3mm}}
\newenvironment{cor1proof}{{\bf Proof of Corollary 1}.}{\hspace{3mm}\rule{3mm}{3mm}}
\newenvironment{cor2proof}{{\bf Proof of Corollary 2}.}{\hspace{3mm}\rule{3mm}{3mm}}
\newenvironment{cor3proof}{{\bf Proof of Corollary 3}.}{\hspace{3mm}\rule{3mm}{3mm}}

\newenvironment{subproof}{{\bf Proof of Claim}.}{\hspace{3mm}\rule{3mm}{3mm}}
\maketitle\thispagestyle{empty}
\begin{abstract}
Let $D$ be a directed graph with vertex set $V$ and order $n$. An {\em anti-directed
(hamiltonian) cycle} $H$ in $D$ is a (hamiltonian) cycle in the
graph underlying $D$ such that no pair of consecutive arcs in $H$
  form a directed path in
$D$.  An {\em anti-directed 2-factor} in $D$ is a vertex-disjoint collection of anti-directed cycles in $D$ that span $V$.
It was proved in \cite{BJMPT}  that if  the indegree and the outdegree of each vertex of
$D$ is greater than $\frac{9}{16}n $ then $D$ contains an anti-directed hamiltonian cycle. In this paper we prove that given a directed
graph $D$, the problem of determining whether $D$ has an anti-directed 2-factor is NP-complete, and we use a proof technique similar
to the one used in \cite{BJMPT} to prove that if  the indegree and the outdegree of each vertex of
$D$ is greater than $\frac{25}{48}n $ then $D$ contains an anti-directed 2-factor.
\end{abstract}

\section{Introduction}
Let $G$ be a multigraph with vertex set $V(G)$ and edge set $E(G)$. For a vertex $v \in V(G)$, the degree
of $v$ in $G$, denoted by ${\rm deg}(v,G)$ is the number of edges of $G$ incident on $v$.
Let $\delta(G) = {\rm min}_{v \in V(G)}\{{\rm deg}(v,G)\}$.
The simple graph underlying $G$ denoted by simp($G$) is the graph obtained from $G$ by replacing
all multiple edges by single edges. A {\em 2-factor} in $G$ is a collection of vertex-disjoint cycles
that span $V(G)$.
Let $D$ be a directed graph with vertex set $V(D)$ and arc set
$A(D)$. For a vertex $v \in V(D)$, the {\em outdegree} (respectively, {\em indegree}) of $v$ in $D$ denoted
by $d^{+}(v,D)$ (respectively, $d^{-}(v,D)$)
is the number of arcs of $D$ directed out of $v$ (respectively, directed into $v$). Let $\delta(D) = {\rm min}_{v \in V(D)}\{{\rm min}\{d^{+}(v,D),
d^{-}(v,D)\}\}$. The {\em multigraph underlying $D$} is the multigraph obtained from $D$ by ignoring the directions
of the arcs of $D$. A {\em directed (Hamilton) cycle} $C$ in $D$ is a (Hamilton) cycle in
the multigraph underlying $D$ such that all pairs of consecutive arcs in $C$
 form a directed path in
$D$.
 An {\em anti-directed (Hamilton) cycle} $C$ in $D$ is a (Hamilton) cycle in the
multigraph underlying $D$ such that no pair of consecutive arcs in $C$
 form a directed path in
$D$. A {\em directed 2-factor} in $D$ is a collection of vertex-disjoint directed cycles in $D$ that
span $V(D)$. An {\em anti-directed 2-factor} in $D$ is a collection of vertex-disjoint anti-directed cycles in $D$ that
span $V(D)$.
Note that every anti-directed cycle in $D$ must have an even number of vertices.
 We refer the reader to ([1,7]) for
all terminology and notation that is not defined in this paper.

The following classical theorems by Dirac \cite{Dirac} and Ghouila-Houri \cite{GH}
give sufficient conditions for the existence of a Hamilton cycle in a graph $G$ and for
the existence of a directed Hamilton cycle in a directed graph $D$ respectively.
\begin{theorem}{\rm \cite{Dirac}} If $G$ is a graph of order $n \geq 3$ and
$\delta(G) \geq \frac{n}{2}$, then $G$ contains a Hamilton cycle.
\end{theorem}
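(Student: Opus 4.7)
The plan is to use the classical longest-path argument. First I would show that $G$ is connected: if $G$ split into two components, then the smaller one would have at most $\lfloor n/2 \rfloor$ vertices, so every vertex in it would have degree at most $\lfloor n/2 \rfloor - 1 < n/2$, contradicting the hypothesis. Throughout the proof I will use this connectivity only at the very end.

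Next, let $P = v_0 v_1 \cdots v_\ell$ be a longest path in $G$. By maximality of $P$, every neighbor of $v_0$ and every neighbor of $v_\ell$ must lie on $V(P)$. Define
\[
S = \{\, i : v_i v_\ell \in E(G)\,\} \quad \text{and} \quad T = \{\, i : v_0 v_{i+1} \in E(G)\,\},
\]
both subsets of $\{0, 1, \ldots, \ell - 1\}$. We have $|S| = \deg(v_\ell) \geq n/2$ and $|T| = \deg(v_0) \geq n/2$, while $\ell + 1 \leq n$ forces $\ell \leq n - 1$. A pigeonhole count then gives $|S| + |T| \geq n > \ell \geq |S \cup T|$, so there exists $i \in S \cap T$. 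The edges $v_i v_\ell$ and $v_0 v_{i+1}$ close up $P$ into the cycle
\[
C = v_0 v_1 \cdots v_i v_\ell v_{\ell-1} \cdots v_{i+1} v_0
\]
that spans $V(P)$.

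Finally I would upgrade $C$ to a Hamilton cycle. Suppose some $u \in V(G) \setminus V(C)$ exists. Since $G$ is connected, we may choose such a $u$ with a neighbor $v_j$ on $C$. Traversing $C$ beginning at $v_j$ and then appending the edge $v_j u$ produces a path in $G$ strictly longer than $P$, contradicting the maximality of $P$. Hence $V(C) = V(G)$ and $C$ is the desired Hamilton cycle.

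The only substantive step is the counting argument guaranteeing $S \cap T \neq \emptyset$; everything else (connectivity, maximality, the final path-extension) is routine bookkeeping. If any obstacle arises, it would be in carefully handling the endpoints in the definitions of $S$ and $T$ so that both genuinely sit inside $\{0,1,\ldots,\ell-1\}$, which is where the hypothesis $n \geq 3$ (ensuring $\ell \geq 2$ and hence a genuine cycle) is silently used.
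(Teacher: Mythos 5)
Your proof is correct. Note, however, that the paper does not prove this statement at all: Theorem 1 is Dirac's classical theorem, quoted with a citation and used as a black box (e.g.\ to deduce that $\delta(D)\geq\frac{3}{4}n$ forces an anti-directed Hamilton cycle). So there is no proof in the paper to compare against; what you have written is the standard longest-path (rotation) argument for Dirac's theorem, and every step checks out: connectivity from the degree bound, the fact that all neighbours of the endpoints of a longest path $P=v_0\cdots v_\ell$ lie on $P$, the pigeonhole count $|S|+|T|\geq n>\ell\geq|S\cup T|$ producing an index $i\in S\cap T$ and hence a cycle on $V(P)$, and the final extension step showing that a non-spanning such cycle would contradict the maximality of $P$. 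Your closing remark is also the right one: the hypothesis $n\geq 3$ (together with $\delta\geq n/2\geq 3/2$, so $v_0$ has at least two neighbours on $P$) guarantees $\ell\geq 2$, so the object produced is a genuine cycle rather than a doubled edge.
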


\begin{theorem}{\rm \cite{GH}} If $D$ is a directed graph of order $n$ and
$\delta(D) \geq \frac{n}{2}$, then $D$ contains a directed
Hamilton cycle.
\end{theorem}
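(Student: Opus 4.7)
The plan is to adapt the longest-cycle argument underlying Dirac's Theorem~1 to the directed setting: first I would show that the hypothesis forces $D$ to be strongly connected, and then I would take a longest directed cycle $C$ in $D$ and derive a contradiction from the assumption that $|V(C)|<n$.

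For strong connectivity, suppose toward a contradiction that some nonempty proper $S\subset V(D)$ sends no arc to $\bar{S}=V(D)\setminus S$. Then every $v\in S$ satisfies $d^{+}(v,D)\leq|S|-1$, so the hypothesis $d^{+}(v,D)\geq n/2$ forces $|S|\geq n/2+1$; the symmetric in-degree argument applied to $\bar{S}$ gives $|\bar{S}|\geq n/2+1$, contradicting $|S|+|\bar{S}|=n$.

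For the cycle-extension stage, write $C=u_0 u_1\cdots u_{k-1} u_0$ and, for any vertex $x\notin V(C)$, put $I(x)=\{i:u_i\to x\}$ and $O(x)=\{i:x\to u_i\}$ (indices read modulo $k$). The key insertion observation is that $i\in I(x)$ together with $i+1\in O(x)$ would let me replace the arc $u_i u_{i+1}$ by the directed path $u_i,x,u_{i+1}$, producing a directed cycle of length $k+1$ and contradicting the maximality of $C$; hence the shifted set $I(x)+1$ is disjoint from $O(x)$ inside $\{0,1,\ldots,k-1\}$, so $|I(x)|+|O(x)|\leq k$. Combined with the easy lower bounds $|I(x)|,|O(x)|\geq k-n/2+1$ (which follow from $d^{+}(x,D),d^{-}(x,D)\geq n/2$ and $|V(D)\setminus V(C)|=n-k$), this forces $k\leq n-2$.

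The main obstacle will be closing this remaining gap, since a single outside vertex is not enough. I plan to strengthen the argument by choosing a longest directed path $P=x_0 x_1\cdots x_s$ in $D-V(C)$ and using a path-insertion principle: a single index $i$ with $u_i\to x_0$ and $x_s\to u_{i+1}$ would allow $P$ to be spliced into $C$ at the arc $u_i u_{i+1}$, producing a directed cycle of length $k+s+1$ and the desired contradiction. The maximality of $P$ inside $D-V(C)$ confines all in-neighbors of $x_0$ and all out-neighbors of $x_s$ to $V(C)\cup V(P)$, and a careful pigeonhole count on the positions of these neighbors around $C$, together with the strong connectivity established above, should force such an index $i$ to exist.
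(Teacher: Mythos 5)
The paper does not prove this statement at all: Theorem 2 is the classical Ghouila--Houri theorem, quoted from \cite{GH} as a known result, so there is no internal proof to compare against and your attempt must stand on its own. As written, it does not: it is an outline whose decisive step is announced but not carried out.

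The parts you do execute are fine. The strong-connectivity argument is correct, and the single-vertex insertion count ($|I(x)|+|O(x)|\leq k$ together with $|I(x)|,|O(x)|\geq k-\frac{n}{2}+1$, hence $k\leq n-2$) is sound. The genuine gap is the last paragraph. The splicing condition you propose --- a single index $i$ with $u_i\to x_0$ and $x_s\to u_{i+1}$ --- gives, by the same disjointness argument, only $|A|+|B|\leq k$ where $A,B$ are the positions on $C$ of the in-neighbours of $x_0$ and out-neighbours of $x_s$. Since maximality of $P$ confines the remaining neighbours to $V(P)$, one gets $|A|,|B|\geq \frac{n}{2}-s$, and combining with $k+s+1\leq n$ yields only $n-2s\leq k\leq n-s-1$, i.e.\ $s\geq 1$ --- no contradiction. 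So the ``careful pigeonhole count'' you defer is not a routine check; it is the heart of the theorem. To close it one needs the stronger observation that if $u_i\to x_0$ then \emph{none} of the $s+1$ positions $i+1,\dots,i+s+1$ can carry an out-neighbour of $x_s$ (since replacing the subpath $u_i\cdots u_{i+t}$, $t\leq s+1$, by $u_i,x_0,\dots,x_s,u_{i+t}$ still lengthens $C$), which forces $|B|\leq k-|A|-s$ and does produce the contradiction $n-s\leq k\leq n-s-1$; and even then one must separately handle the degenerate cases (e.g.\ $A=\emptyset$, in-neighbours of $x_0$ lying on $P$, wrap-around when $|A|+s\geq k$), which is where the known proofs of Ghouila--Houri's theorem spend most of their effort. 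Until that counting is actually done, the proof is incomplete. You also never use the strong connectivity you established, so either integrate it into the final count or drop it.
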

Note that if $D$ is a directed graph of even order $n$ and
$\delta(D) \geq \frac{3}{4}n$ then $D$ contains an anti-directed
Hamilton cycle. To see this, let $G$ be the multigraph underlying $D$ and let $G'$ be the
subgraph of $G$ consisting of the parallel edges of $G$. Now, $\delta(D) \geq \frac{3}{4}n$ implies that
$\delta({\rm simp}(G')) \geq \frac{n}{2}$ and hence Theorem 1 implies that simp($G'$) contains a Hamilton
cycle which in turn implies that $D$ contains an anti-directed Hamilton cycle.

The following theorem by Grant \cite{Grant} gives a sufficient condition for the existence
of an anti-directed Hamilton cycle in a directed graph $D$.
\begin{theorem} {\rm \cite{Grant}} If $D$ is a directed graph with even order $n$ and if
$\delta(D) \geq \frac{2}{3}n + \sqrt{n{\rm log}(n)}$ then $D$ contains an anti-directed
Hamilton cycle.
\end{theorem}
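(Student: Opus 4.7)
The plan is to reduce the existence of an anti-directed Hamilton cycle in $D$ to the existence of an ordinary Hamilton cycle in a suitable bipartite graph obtained from a balanced bipartition of $V(D)$. The key structural observation is that in any anti-directed cycle, the two arcs incident to a given vertex must either both point out of or both point into that vertex, so the vertices alternate between \emph{sources} and \emph{sinks}. Hence an anti-directed Hamilton cycle in $D$ corresponds to a balanced partition $V(D) = X \cup Y$ with $|X|=|Y|=n/2$ (where $X$ is the set of sinks and $Y$ the set of sources), together with a Hamilton cycle in the bipartite graph $B(X,Y)$ whose edges are the underlying pairs of the arcs of $D$ directed from $Y$ to $X$. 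It therefore suffices to exhibit some balanced bipartition for which $B(X,Y)$ is Hamiltonian.

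To produce such a partition I would choose $(X,Y)$ uniformly at random. For any fixed vertex $v$, the number of its out-neighbors landing in $X$ (when $v\in Y$), or in-neighbors landing in $Y$ (when $v\in X$), is a hypergeometric random variable with mean at least $\frac{1}{2}\delta(D) \geq \frac{n}{3} + \frac{1}{2}\sqrt{n\log n}$, up to lower-order terms. A Hoeffding inequality for the hypergeometric distribution shows that downward deviations of order $\sqrt{n\log n}$ occur with probability $o(1/n)$, so a union bound over the $n$ vertices and the two degree conditions gives, with positive probability, that every vertex of $B(X,Y)$ has degree at least $\frac{n}{3} - O(\sqrt{n\log n}) > \frac{n}{4}+1$ for all sufficiently large $n$. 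Fixing such an outcome, the Moon--Moser bipartite analogue of Dirac's theorem guarantees a Hamilton cycle in $B(X,Y)$, and this cycle lifts at once to an anti-directed Hamilton cycle in $D$.

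The main obstacle I expect is the probabilistic step: the error term $\sqrt{n\log n}$ in the hypothesis is calibrated precisely so that, after a union bound over $n$ vertices, the minimum degree of $B(X,Y)$ remains comfortably above $n/4$. One also has to handle the small-$n$ cases separately, or absorb them into a ``for $n$ sufficiently large'' clause strengthened by direct verification. The structural reduction (source/sink alternation) and the appeal to Moon--Moser are then routine, so the heart of the argument is the concentration estimate for the hypergeometric distribution, with the gap between $\frac{n}{3}$ and $\frac{n}{4}+1$ providing ample room for the logarithmic-factor slack that the union bound demands.
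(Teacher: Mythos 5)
The paper does not actually prove Theorem 3; it is quoted from Grant's paper \cite{Grant}, so there is no in-paper proof to compare against line by line. That said, your proposal is sound and is essentially the same framework the present paper uses to prove its own Theorems 4 and 7: the source/sink alternation argument showing that an anti-directed Hamilton cycle is exactly a Hamilton cycle of $B(X,Y)$ for some equipartition $V(D)=X\cup Y$ is precisely the reduction at the start of the proof of Theorem 7, and your ``choose the equipartition at random and union-bound'' step is the probabilistic rephrasing of the paper's counting argument $T>S$ over all $\binom{n}{n/2}$ equipartitions. The difference is in the Hamiltonicity criterion and the bookkeeping: you invoke the Moon--Moser/Dirac-type bipartite condition (minimum degree exceeding $n/4$ in each part of size $n/2$) together with hypergeometric concentration, whereas the paper uses the much finer Chv\'atal-type degree-sequence conditions (Theorem 8, and Theorems 10--11 for 2-factors) with exact binomial-coefficient estimates. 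Your route is simpler and amply sufficient here: the mean degree in $B(X,Y)$ is about $\delta/2\ge n/3+\frac12\sqrt{n\log n}$, so the deviation you must rule out is of linear order $n/12$, giving per-vertex failure probability $e^{-\Omega(n)}$ and a union bound with room to spare (in fact this argument yields the stronger threshold $\frac n2+O(\sqrt{n\log n})$ for large $n$). What the paper's heavier machinery buys is exactly what your approach cannot deliver: clean absolute constants ($\frac{9}{16}n$, $\frac{25}{48}n$) valid for all $n$ rather than a bound with an error term or a ``sufficiently large $n$'' proviso. One point you should make explicit rather than wave at: for the small $n$ not covered by the union bound, the hypothesis $\delta(D)\ge\frac23 n+\sqrt{n\log n}$ already forces $\delta(D)\ge\frac34 n$, and then the elementary observation following Theorem 2 (the doubled edges of the underlying multigraph satisfy Dirac's condition) finishes those cases, so no separate ad hoc verification is needed.
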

In his paper Grant \cite{Grant} conjectured that the theorem above can be strengthened to
assert that if $D$ is a directed graph with even order $n$ and if
$\delta(D) \geq \frac{1}{2}n$ then $D$ contains an anti-directed
Hamilton cycle. Mao-cheng Cai \cite{Mao} gave a counter-example to this conjecture.
\noindent
In \cite{BJMPT} the following sufficient condition for the existence of an anti-directed Hamilton
cycle in a directed graph was proved.
\begin{theorem} {\rm \cite{BJMPT}} Let $D$ be a directed graph of even order $n$ and suppose that $\frac{1}{2} < p < \frac{3}{4}$.
If $\delta(D) \geq pn$ and $n > \frac{{\rm ln}(4)}{\left(p - \frac{1}{2}\right){\rm ln}\left(\frac{p + \frac{1}{2}}{\frac{3}{2} - p}\right)}$, then $D$ contains an anti-directed
Hamilton cycle.
\end{theorem}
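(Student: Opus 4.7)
The plan is to reduce the existence of an anti-directed Hamilton cycle to a bipartite Hamiltonicity question and then to produce the required partition by the probabilistic method. First, observe that an anti-directed Hamilton cycle in $D$ is the same data as a balanced bipartition $V(D) = S \cup T$ with $|S| = |T| = n/2$ (the even parity of $n$ matches the necessarily even length of any anti-directed cycle), together with a Hamilton cycle in the bipartite graph $B(S,T)$ whose edges are the pairs $\{s,t\}$, $s \in S$, $t \in T$, such that $(s,t)$ is an arc of $D$. In such a cycle every vertex of $S$ is a source of its two incident arcs and every vertex of $T$ is a sink, so no two consecutive arcs form a directed path. Thus the problem reduces to exhibiting a balanced bipartition for which $B(S,T)$ is Hamiltonian.

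Second, I would pick $S$ uniformly at random among the ${n \choose n/2}$ balanced partitions. For each vertex $v$, its degree in $B(S,T)$ equals $|N^{+}(v) \cap T|$ if $v \in S$ and $|N^{-}(v) \cap S|$ if $v \in T$; in either case it is a hypergeometric variable with mean at least $pn/2$, exceeding the bipartite Dirac-type threshold $n/4$ by the positive margin $(p - \frac{1}{2})n/2$. A Chernoff-Hoeffding tail bound for the hypergeometric distribution yields that the probability any single such count drops to $n/4$ or below decays exponentially in $n$, with exponential rate $(p - \frac{1}{2})\ln((p + \frac{1}{2})/(\frac{3}{2} - p))$ obtained by optimizing the Chernoff tilt; this rate is recognizable as the relative entropy between the Bernoulli distributions with parameters $\frac{1}{2}$ and $p$. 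The numerical hypothesis on $n$ in the theorem is precisely the threshold at which the combined failure probability is strictly less than $1$, so some balanced bipartition makes $B(S,T)$ have minimum degree strictly greater than $n/4$.

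Third, I would invoke the bipartite analog of Dirac's theorem due to Moon and Moser: a balanced bipartite graph with parts of size $m$ and minimum degree greater than $m/2$ is Hamiltonian. With $m = n/2$ this applies to $B(S,T)$ and yields a Hamilton cycle, which lifts back to an anti-directed Hamilton cycle in $D$. The main obstacle is matching the Chernoff constants to the precise threshold in the statement: while the exponential rate arises naturally from the Chernoff optimization, realizing the constant $\ln 4$ in the numerator rather than a growing $\ln n$ requires that the union over bad events be taken over only a bounded number of aggregated failure events, rather than one bound per vertex, and this careful accounting (or a refined rotation-extension argument that sidesteps the per-vertex union bound entirely) is where the delicate quantitative work lies.
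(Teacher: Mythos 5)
Your reduction to bipartite Hamiltonicity over a balanced bipartition, and the idea of producing the bipartition by a counting/probabilistic argument, is exactly the skeleton of the argument in \cite{BJMPT} (reproduced in this paper as the proof of Theorem 7). But there is a genuine gap at the quantitative core, and you have located it yourself without closing it. If you use the Moon--Moser minimum-degree criterion (minimum degree greater than $n/4$ in $B(S,T)$) together with a per-vertex tail bound and a union bound over all $n$ vertices, the failure probability is of order $n e^{-cn}$ with $c = \left(p-\frac{1}{2}\right)\ln\left(\frac{p+\frac{1}{2}}{\frac{3}{2}-p}\right)$, and this is below $1$ only when $cn > \ln n$. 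That recovers a threshold with a $\ln n$ term --- essentially Grant's Theorem 3 --- not the stated constant $\ln 4$; and the entire content of the theorem is the removal of that logarithmic loss. Your closing sentence gestures at ``aggregated failure events'' or a ``refined rotation-extension argument,'' but neither is supplied, and the rotation-extension route is not what is used here.

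The missing idea is to replace the minimum-degree criterion by Chv\'atal's degree-sequence theorem for balanced bipartite graphs (Theorem 8 of this paper): if $B(X,Y)$ has no Hamilton cycle, then for some $k \le \frac{n}{4}$ the graph has at least $k$ vertices of degree at most $k$. This multiplicity is the whole point: each bad equipartition carries not one low-degree vertex but $k$ of them, so it can be charged total weight $\sum_{j} 1/d_{j} \ge 1$ spread over those vertices. Summing over all vertices and all degree values then bounds the number of bad equipartitions by $S = n\left(\frac{n_{2}}{2} + \cdots + \frac{n_{\lfloor n/4\rfloor}}{\lfloor n/4\rfloor}\right)$, where the divisor $k$ exactly absorbs the factor $n$ coming from the union over vertices. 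The comparison $T = {n \choose \frac{n}{2}} > S$ is then carried out not by an off-the-shelf Chernoff bound but by pairing the terms $n_{\delta/2+i}$ against $n_{n/4-i}$ and estimating ratios of binomial coefficients directly (Lemma 5 and inequalities (4)--(8)); this is where the exact constant $\ln 4$ and the rate $\left(p-\frac{1}{2}\right)\ln\left(\frac{p+\frac{1}{2}}{\frac{3}{2}-p}\right)$ come from. Without Chv\'atal's theorem, or some equivalent device guaranteeing many low-degree vertices in every non-Hamiltonian bipartition, your outline cannot reach the stated bound.
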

\noindent
It was shown in \cite{BJMPT} that Theorem 4 implies the following corollary that is an improvement on the result
in Theorem 3.
\noindent
\begin{corollary} {\rm \cite{BJMPT}}
If $D$ is a directed graph of even order $n$ and $\delta(D) > \frac{9}{16}n $ then $D$ contains an anti-directed
Hamilton cycle.
\end{corollary}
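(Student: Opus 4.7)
The plan is to invoke Theorem~4 directly with the single parameter choice $p = 9/16$. Since the hypothesis $\delta(D) > \tfrac{9}{16}n$ gives $\delta(D) \geq pn$ for this $p$, and $p$ lies in the open interval $(1/2, 3/4)$, the only non-trivial condition of Theorem~4 that remains to be checked is its lower bound on $n$.

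First I would substitute $p = 9/16$ into the expression
\[
f(p) \;=\; \frac{\ln 4}{\left(p-\tfrac{1}{2}\right)\ln\!\left(\dfrac{p+\tfrac{1}{2}}{\tfrac{3}{2}-p}\right)}.
\]
This yields $p-\tfrac12 = \tfrac{1}{16}$, $p+\tfrac12 = \tfrac{17}{16}$, $\tfrac32-p = \tfrac{15}{16}$, and hence $f(9/16) = 16\ln 4 / \ln(17/15)$, a fixed finite constant. For every even $n$ exceeding this constant, the hypotheses of Theorem~4 hold with $p = 9/16$, so $D$ contains an anti-directed Hamilton cycle.

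The main obstacle is the finite family of even $n$ below the threshold $f(9/16)$. For these values, the strict inequality $\delta(D) > \tfrac{9}{16}n$ together with the integrality of $\delta(D)$ pushes the ratio $\delta(D)/n$ bounded away from $9/16$ by a margin that depends on $n$, so I would rerun Theorem~4 with the sharper effective choice $p = \delta(D)/n$. Since the denominator in $f(p)$ is strictly increasing on $(1/2, 3/4)$, the function $f$ is strictly decreasing there, and larger $p$ relaxes the required $n$-bound. The remaining tail of very small $n$, where even this sharpening fails, is absorbed by the elementary observation recorded just after Theorem~2 in the introduction: if integrality of $\delta(D)$ forces $\delta(D) \geq 3n/4$, then Dirac's Theorem applied to $\mathrm{simp}(G')$, the simple graph of doubled edges in the multigraph underlying $D$, already produces a Hamilton cycle that is automatically anti-directed in $D$.

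In short, the argument is a one-line substitution plus monotonicity of $f(p)$, with the only delicate part being a bookkeeping check that between the ranges handled by Theorem~4 (for intermediate $p$) and by the $3n/4$ fallback (for large enough integer degree) no even $n$ is left uncovered.
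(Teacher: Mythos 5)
Your first step coincides with the paper's: substituting $p=\frac{9}{16}$ gives the threshold $\frac{16\ln 4}{\ln(17/15)}\approx 177.2$, so Theorem~4 settles every even $n\geq 178$. The genuine gap is in how you dispose of the even $n$ below $178$. The integrality sharpening $p'=\delta(D)/n$ gains at most $\frac{1}{n}$ over $\frac{9}{16}$ (and for most residues of $n$ only $\frac{1}{8n}$), while near $p=\frac{9}{16}$ the threshold function behaves like $f(p)\approx \frac{\ln 4}{2(p-\frac{1}{2})^{2}}$, so a perturbation of order $\frac{1}{8n}$ lowers the required bound on $n$ by only a few units. Concretely, for $n=158$ the hypothesis gives $\delta(D)\geq 89$, hence $p'=\frac{89}{158}\approx 0.5633$ and $f(p')\approx 173>158$, so Theorem~4 still does not apply; $n=128$ fails similarly even though $16\mid n$. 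Meanwhile the Dirac fallback needs $\delta(D)\geq\frac{3}{4}n$, which integrality forces only for $n\leq 4$. So a large contiguous block of even $n$ (roughly $6\leq n\leq 172$) is covered by neither of your two devices, and the ``bookkeeping check'' you defer to in fact fails.

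The paper closes this range by a different means: rather than re-invoking the statement of the theorem with a better $p$, it goes back inside the proof and verifies the counting inequality $T>S$ (inequality (3)) by direct computation for each even $n<178$, using a sharper bound $S'$ (obtained from a stronger application of Chv\'atal's condition, Theorem~8) when $n\equiv 0\pmod 4$. Some finite computation of this kind appears unavoidable here; the monotonicity of $f$ cannot substitute for it, because the integrality gain in $p$ is asymptotically too small to move the threshold below $n$.
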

\noindent
The following theorem (see \cite{digraphsbook}) gives a necessary and sufficient condition for
the existence of a directed 2-factor in a digraph $D$.
\begin{theorem}
A directed graph $D = (V,A)$ has a directed 2-factor if and only if $|\bigcup_{v \in X}N^{+}(v)| \geq |X|$ for all $X \subseteq V$.
\end{theorem}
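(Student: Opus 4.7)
The plan is to reduce the existence of a directed $2$-factor in $D$ to the existence of a perfect matching in an auxiliary bipartite graph, and then invoke Hall's theorem.

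First, I would build the bipartite graph $B$ with parts $V^{+}=\{v^{+}:v\in V\}$ and $V^{-}=\{v^{-}:v\in V\}$, placing an edge between $v^{+}$ and $u^{-}$ precisely when $(v,u)\in A$. The key observation is that the arc sets of spanning subdigraphs $F$ of $D$ in which every vertex has in-degree and out-degree exactly one are in bijection with the perfect matchings of $B$: an arc $(v,u)\in F$ corresponds to the edge $v^{+}u^{-}$, and requiring $d^{+}(v,F)=d^{-}(v,F)=1$ for every $v$ is exactly the requirement that each $v^{+}$ and each $v^{-}$ be saturated. Such a spanning subdigraph $F$ decomposes uniquely into vertex-disjoint directed cycles (follow out-arcs from any vertex; since both in- and out-degrees equal one, the walk must close into a directed cycle, and removing that cycle leaves a smaller subdigraph with the same property). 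Hence $D$ has a directed $2$-factor if and only if $B$ has a perfect matching.

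Next, I would apply Hall's theorem on the side $V^{+}$. Since $|V^{+}|=|V^{-}|=n$, $B$ has a perfect matching if and only if $|N_{B}(S)|\geq |S|$ for every $S\subseteq V^{+}$. Writing $S=\{v^{+}:v\in X\}$ for some $X\subseteq V$, the neighbourhood in $B$ is
\[
N_{B}(S)=\{u^{-}:\exists\,v\in X\text{ with }(v,u)\in A\}=\Bigl\{u^{-}:u\in\bigcup_{v\in X}N^{+}(v)\Bigr\},
\]
so $|N_{B}(S)|=\bigl|\bigcup_{v\in X}N^{+}(v)\bigr|$. Thus Hall's condition on $B$ is exactly the stated condition on $D$, completing both directions simultaneously.

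The only subtle step is the bijection between $1$-regular spanning subdigraphs and directed $2$-factors, which relies on the elementary fact that any digraph in which every vertex has in- and out-degree one is a disjoint union of directed cycles. Everything else is a direct translation between the digraph $D$ and the bipartite graph $B$, plus the standard Hall/König matching theorem; no delicate estimates are needed.
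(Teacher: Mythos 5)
Your proof is correct and is the standard argument: the paper itself gives no proof of this theorem, citing it from Bang-Jensen and Gutin's book, and the bipartite split construction ($V^{+}$, $V^{-}$, arcs as edges) combined with Hall's theorem is precisely the argument found there. The one point worth making explicit is that your bijection treats a digon $u\to v\to u$ as a legitimate directed cycle of length two, which is consistent with the paper's definition of a directed cycle as a cycle in the underlying multigraph (where two parallel edges form a cycle), so no gap arises.
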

\noindent
We note here that given a directed graph $D$ the problem of determining whether $D$ has a directed Hamilton cycle is known to
be NP-complete, whereas, there exists an O$(\sqrt{n}m)$ algorithm (see \cite{digraphsbook}) to check if
a directed graph $D$ of order $n$ and size $m$ has a directed 2-factor. On the other hand, the following theorem proves that given
a directed graph $D$, the problem of determining whether $D$ has a directed 2-factor is NP-complete. We are indebted to Sundar Vishwanath for
pointing out the short proof of Theorem 6 given below.
\noindent
\begin{theorem} {\rm \cite{Sundar}} Given a directed graph $D$, the problem of determining whether $D$ has an anti-directed 2-factor.
is NP-complete.
\end{theorem}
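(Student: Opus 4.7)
The plan is to reduce from the problem of deciding whether a cubic graph is 3-edge-colorable, which is NP-complete by Holyer's theorem. Membership in NP is routine: given a candidate anti-directed 2-factor, one verifies in polynomial time that every vertex is covered, that each vertex has either in-degree 2 or out-degree 2 in the chosen arcs, and that each resulting cycle is anti-directed.

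For hardness, given a cubic graph $G = (V,E)$, construct the ``symmetric digraph'' $D$ on the same vertex set by replacing each edge $uv$ with the two arcs $(u,v)$ and $(v,u)$. I would then prove that $G$ is 3-edge-colorable if and only if $D$ has an anti-directed 2-factor. For the forward direction, a 3-edge-coloring partitions $E$ into three perfect matchings $M_1, M_2, M_3$; the subgraph $M_2 \cup M_3$ is a 2-factor of $G$ all of whose cycles alternate edges of $M_2, M_3$ and hence have even length. For each such cycle $v_1 v_2 \cdots v_{2k} v_1$, orient the arcs in $D$ so that odd-indexed vertices become sources (out-degree 2) and even-indexed vertices become sinks (in-degree 2); the result is an anti-directed cycle in $D$, and the union over all cycles is an anti-directed 2-factor of $D$.

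For the reverse direction, suppose $D$ has an anti-directed 2-factor $\mathcal{F}$. Each cycle of $\mathcal{F}$ has even length, since the vertices along it must alternate between sources and sinks. Moreover, no cycle of $\mathcal{F}$ can contain both $(u,v)$ and $(v,u)$: if it did, then either the cycle has length 2 (excluded as it would be a directed 2-cycle, not anti-directed), or vertex $u$ would have both an incoming and an outgoing arc in the cycle, contradicting the requirement that $u$ be purely a source or a sink. Hence the underlying edges of $\mathcal{F}$ form a 2-factor $F$ of $G$ whose cycles are all even. Since $G$ is cubic, the complement $E \setminus F$ is a perfect matching $M_1$, and the even cycles of $F$ can be properly 2-edge-colored to yield matchings $M_2, M_3$, giving a 3-edge-coloring of $G$.

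The only delicate point in the argument is the ``no double-use of edges'' claim in the reverse direction, which is what ensures the anti-directed 2-factor of $D$ actually projects to a genuine 2-factor of $G$; this is what the source/sink alternation at each vertex immediately provides. The rest is bookkeeping, so I expect the proof to be quite short once the symmetric-digraph construction and the alternating orientation of even cycles are in place.
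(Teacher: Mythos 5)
Your proposal is correct and follows exactly the reduction in the paper: replace each edge of a cubic graph by a pair of oppositely directed arcs and observe that a 3-edge-coloring corresponds to an anti-directed 2-factor. The paper simply asserts this equivalence as ``clear,'' whereas you supply the details (the alternating source/sink orientation of the even cycles of $M_2 \cup M_3$, and the check that no cycle of the 2-factor uses both arcs of a pair), all of which are accurate.
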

\begin{proof}
Clearly the the problem of determining whether $D$ has an anti-directed 2-factor is in NP.
A graph $G$ is said to be $k$-edge colorable if the edges of $G$ can be colored with $k$ colors in such a way that
no two adjacent edges receive the same color. It is well known that given a cubic graph $G$, it is NP-complete to
determine if $G$ is 3-edge colorable. Now, given a cubic graph $G = (V,E)$, construct a directed graph $D = (V,A)$,
where for each $\{u,v\}$ $\in$ $E$, we have the oppositely directed arcs $(u,v)$ and $(v,u)$ in $A$.
It is clear that $G$ is 3-edge colorable if and only if $D$ contains an anti-directed 2-factor. This proves that the
the problem of determining whether a directed graph $D$ has an anti-directed 2-factor
is NP-complete.
\end{proof}
\\
\\
\noindent
In Section 1 of this paper we prove the following theorem that gives a sufficient condition for the existence of an anti-directed 2-factor
in a directed graph.
\begin{theorem} Let $D$ be a directed graph of even order $n$ and suppose that $\frac{1}{2} < p < \frac{3}{4}$.
If $\delta(D) \geq pn$ and $n > \frac{{\rm ln}(4)}{\left(p - \frac{1}{2}\right){\rm ln}\left(\frac{p + \frac{1}{2}}{\frac{3}{2} - p}\right)} (??)$, then $D$ contains an anti-directed
2-factor.
\end{theorem}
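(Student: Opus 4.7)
The strategy is to mirror the probabilistic argument of [BJMPT] used for Theorem 4. The key reduction is that an anti-directed 2-factor in $D$ is equivalent to the data of a balanced bipartition $V(D) = V_1 \cup V_2$ with $|V_1| = |V_2| = n/2$ (the ``source'' and ``sink'' classes of the 2-factor) together with a 2-factor in the bipartite graph $B$ on parts $V_1, V_2$ whose edges are the pairs $\{u,v\}$ with $u \in V_1$, $v \in V_2$, and $(u,v) \in A(D)$. Since $n$ is even, such bipartitions exist, and the plan is to show by a probabilistic argument that a good one can be found.

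The random experiment is to choose the balanced bipartition uniformly at random. For each vertex $v$, the bipartite degree $\deg_B(v)$ equals $|N^+(v) \cap V_2|$ if $v \in V_1$ and $|N^-(v) \cap V_1|$ if $v \in V_2$; in either case the distribution is hypergeometric with mean at least $\delta(D)/2 \geq pn/2$, so the degree concentrates tightly around this value.

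To conclude that $B$ has a 2-factor, I would invoke a sufficient minimum-degree criterion for a balanced bipartite graph of part size $m = n/2$. The cleanest easy option is $\delta(B) \geq m/2$, which already produces a Hamilton cycle (hence a 2-factor) by the Moon-Moser theorem; any improvement pushing the bipartite 2-factor threshold strictly below $m/2$ propagates directly into a weaker requirement on $p$, and this is the mechanism by which the constant $25/48$ (as opposed to the $9/16$ of [BJMPT]) is to be obtained. The concentration step is a Chernoff-type inequality showing that for each vertex $v$ the probability that $\deg_B(v) < n/4$ is at most $e^{-c(p) n}$ with $c(p) = (p - \frac{1}{2}) \ln \bigl( (p + \frac{1}{2})/(\frac{3}{2} - p) \bigr)$, this rate function arising from the usual optimization of the exponential tilting parameter against the binomial (hypergeometric) large-deviation tail at level $n/4$. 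Summing over the four classes of bad events (the two partition sides crossed with the two arc directions) yields a total bound $4\, e^{-c(p) n}$, and the hypothesis $n > \ln(4)/c(p)$ forces this to be below $1$, so a good partition exists.

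The main obstacle is twofold. First, extracting the precise rate function $c(p)$ of the stated form requires a careful handling of the hypergeometric (rather than binomial) tail and a tight choice of tilting parameter; this is routine but must be executed carefully to recover exactly the advertised $n$-threshold. Second, and more substantially, pushing the bipartite 2-factor sufficient condition strictly below the Moon-Moser Hamilton-cycle threshold $m/2$ is what distinguishes the 2-factor analysis from the Hamilton cycle analysis of [BJMPT], and is responsible for the improvement of the global constant from $9/16$ to $25/48$; identifying and applying such a bipartite 2-factor lemma will be where the genuinely new work lies.
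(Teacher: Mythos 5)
Your opening reduction matches the paper's: an anti-directed 2-factor corresponds to an equipartition $V(D)=X\cup Y$ together with a 2-factor in the bipartite graph $B(X,Y)$ whose edges come from arcs directed from $X$ into $Y$, and the paper, like you, looks for a good equipartition by averaging over all ${n \choose n/2}$ of them. But there are two genuine gaps. First, your probabilistic accounting is off: the bound $4\,e^{-c(p)n}$ omits the union over the $n$ vertices, and once that factor of $n$ is restored, a pure minimum-degree criterion ($\delta(B)\ge n/4$ via Moon--Moser) forces a threshold of order $\ln(4n)/c(p)$, not the stated $\ln(4)/c(p)$. The paper absorbs the factor of $n$ by using a degree-\emph{sequence} condition rather than a minimum-degree condition: if $B(X_i,Y_i)$ is bad then for some $k\le n/4$ at least $k$ vertices have degree at most $k$, hence $\sum_{j:\,d_j^{i}\le n/4} 1/d_j^{i}\ge 1$; summing this over partitions bounds the number of bad partitions by $S=n\sum_{k=2}^{\lfloor n/4\rfloor} n_k/k$, and $T>S$ is then proved term by term in the form $A_i> nB_i/(\frac{n}{4}-i)$, where the denominator $\frac{n}{4}-i$ cancels the leading $n$ and leaves exactly the constant $4$ that produces the $\ln(4)$ in the hypothesis. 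Without this harmonic weighting your union bound does not close under the stated assumption on $n$.

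Second, the ingredient you explicitly defer --- ``identifying and applying such a bipartite 2-factor lemma'' --- is precisely the new content of the paper, and it is not a minimum-degree threshold pushed below $m/2$. The paper starts from Ore's criterion (a bipartite graph with equipartition $X\cup Y$ has no 2-factor iff some $U\subseteq X$ satisfies $|N^{(2)}(U)|<2|U|$, neighbors counted with multiplicity capped at two) and, by analyzing a minimal deficient set $U$ and its dual deficient set $U^{*}\subseteq Y$, derives Chv\'atal-type degree-sequence conditions for the non-existence of a 2-factor (Theorems 10 and 11): either $d_k\le k$ \emph{and} $d_{k-1}\le k-1$ for some $k\le n/4$, or there are many vertices of degree at most about $n/4-1$. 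It is this strengthened conclusion (two small-degree vertices rather than one, or the alternative clause) that sharpens the harmonic-sum bound to $S'$ and is responsible for the constant $\frac{25}{48}$ replacing $\frac{9}{16}$. As written, your proposal correctly identifies where the new work lies but supplies neither the lemma nor the counting mechanism that makes the advertised threshold on $n$ correct.
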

\noindent
In Section 1 we will show that Theorem 7 implies the following corollary.
\noindent
\begin{corollary} {\rm \cite{BJMPT}}
If $D$ is a directed graph of even order $n$ and $\delta(D) > \frac{25}{48}n $ then $D$ contains an anti-directed
2-factor.
\end{corollary}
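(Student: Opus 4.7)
The plan is to imitate the probabilistic scheme used in the proof of Theorem 4, but at the last step to substitute the search for a bipartite Hamilton cycle with the easier search for a bipartite 2-factor.

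First, perform the same reduction as in Theorem 4: choose a uniformly random balanced partition $V(D) = S \cup T$ with $|S| = |T| = n/2$, and build the undirected bipartite graph $B$ on $S \cup T$ whose edges are precisely the pairs $\{s,t\}$ such that $s \in S$, $t \in T$, and $(s,t) \in A(D)$. Any 2-factor in $B$ lifts to an anti-directed 2-factor in $D$, since in each lifted cycle every vertex of $S$ is a source and every vertex of $T$ is a sink, and consecutive arcs therefore never form a directed path. It thus suffices to show that $B$ has a 2-factor.

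Second, replace the Moon--Moser-type bipartite Hamilton cycle criterion used in the proof of Theorem 4 by a suitably weaker bipartite 2-factor criterion. A natural candidate is a Hall-type defect condition for 2-factors, of the form ``$\sum_{t \in T}\min(d_B(t,X),2) \geq 2|X|$ for every $X \subseteq S$, and symmetrically for subsets of $T$'', which is known to be equivalent to the existence of a 2-factor and which, in the presence of a random partition, can be verified under a milder degree/concentration hypothesis than is needed to force a Hamilton cycle in $B$. It is exactly the slack between these two bipartite criteria that drives the improvement of the numerical constant from the $9/16$ of Corollary 1 to the $25/48$ of Corollary 2.

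Third, rerun the concentration computation from the proof of Theorem 4 with the relaxed criterion in view. For each $v \in V(D)$, the random variables $|N^+(v) \cap T|$ and $|N^-(v) \cap S|$ are hypergeometric with means at least $pn/2$, and an appropriate hypergeometric tail estimate, combined with a union bound over the relevant bad events, shows that under the stated quantitative hypothesis on $n$ a random balanced partition produces, with positive probability, a $B$ satisfying the Hall-type 2-factor condition; the criterion then produces a 2-factor of $B$, hence an anti-directed 2-factor of $D$. Corollary 2 will follow from Theorem 7 by specializing to $p$ just above $25/48$, in the same manner that Corollary 1 follows from Theorem 4.

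The main obstacle I expect lies in the numerical bookkeeping: one must choose the precise bipartite 2-factor criterion carefully so that its Chernoff-verifiable form propagates through the tail estimate and yields exactly the displayed closed-form inequality in Theorem 7, with the critical value of $p$ at which the concentration just barely survives coming out to be $25/48$ rather than the $9/16$ of Theorem 4.
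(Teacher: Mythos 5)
Your final sentence identifies the right mechanism --- the corollary is obtained by feeding a specific value of $p$ into Theorem 7 --- but as written the argument has a genuine gap: Theorem 7 carries the hypothesis $n > \frac{\ln(4)}{\left(p-\frac{1}{2}\right)\ln\left(\frac{p+\frac{1}{2}}{\frac{3}{2}-p}\right)}$, and at $p=\frac{25}{48}$ this threshold evaluates to roughly $1597$. So ``specializing to $p$ just above $25/48$'' proves the statement only for even $n$ beyond that threshold, and says nothing about the many smaller even values of $n$ for which the corollary still makes a claim. Integrality of $\delta(D)$ does not rescue these cases (for $n=48$, say, the hypothesis only forces $\delta\geq 26$, i.e.\ an effective $p=\frac{13}{24}$, whose threshold is still near $400$). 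The paper closes this gap by direct computation: for the finitely many $n$ below the threshold it verifies the counting inequality (3) numerically, and where that bound is too weak it replaces the estimate $S$ of equation (2) by a sharper estimate $S'$ extracted from the full strength of the degree-sequence conditions before checking $T>S'$. Any complete proof of the corollary must include some such treatment of the small-$n$ range.

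A secondary remark: most of your proposal re-derives Theorem 7 rather than using it, and the sketch there does not match how the proof actually runs. The paper does not apply a concentration/union bound to violations of the Ore--Hall condition $|N^{(2)}(U)|\geq 2|U|$ directly (a union over all subsets $U$ would be exponentially large); instead it converts Ore's criterion into Chv\'atal-type degree-sequence conditions for the nonexistence of a bipartite 2-factor (Theorems 10 and 11), and then counts, over all ${n \choose n/2}$ equipartitions, how many can yield a bipartite graph with such a degree sequence. Since Theorem 7 is already established in the paper, you should simply invoke it and concentrate your effort on the small-$n$ verification.
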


\section{Proof of Theorem 7 and its Corollary}
A partition of a set $S$ with $|S|$ being
even into
$S = X \cup Y$ is an {\em equipartition} of $S$ if $|X| = |Y| = \frac{|S|}{2}$.
The proof of Theorem 4 mentioned in the introduction made extensive use of the following
theorem by Chv\'atal \cite{Chvatal}.
\begin{theorem}{\rm \cite{Chvatal}} Let $G$ be a bipartite graph of even order $n$ and with equipartition $V(G) = X \cup Y$.
Let $(d_{1},d_{2},\ldots,d_{n})$ be the degree sequence of $G$ with
$d_{1} \leq d_{2}\leq \ldots \leq d_{n}$. If $G$ does not contain a Hamilton cycle, then
for some $i \leq \frac{n}{4}$ we have that $d_{i} \leq i$ and $d_{\frac{n}{2}} \leq \frac{n}{2} - i$.
\end{theorem}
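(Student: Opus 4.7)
The plan is to establish the contrapositive by an edge-maximality argument in the style of Ore and Chv\'atal, adapted to the bipartite setting. I may assume $G$ is non-Hamiltonian and edge-maximal with this property, since adding edges between $X$ and $Y$ only increases the sorted degree sequence and so preserves the desired conclusion. The first key step is to prove the bipartite Ore-type inequality: for every non-adjacent pair $x\in X$, $y\in Y$, $d(x)+d(y)\le n/2$. Because $G+xy$ contains a Hamilton cycle that must use the new edge, $G$ itself contains a Hamilton path $v_1 v_2\cdots v_n$ with $v_1=x$ and $v_n=y$, and since $G$ is bipartite the vertices alternate so that $v_i\in X$ exactly when $i$ is odd. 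If some odd index $i$ satisfied both $xv_{i+1}\in E$ and $v_iy\in E$, then $v_1 v_{i+1} v_{i+2}\cdots v_n v_i v_{i-1}\cdots v_2 v_1$ would be a Hamilton cycle in $G$, a contradiction. Hence $S=\{i\text{ odd}:xv_{i+1}\in E\}$ and $T=\{i\text{ odd}:v_iy\in E\}$ are disjoint subsets of the $n/2$-element set of odd indices in $\{1,\dots,n-1\}$, so $d(x)+d(y)=|S|+|T|\le n/2$.

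Second, I pick a non-edge $\{x,y\}$ with $x\in X$, $y\in Y$, $d(x)\le d(y)$, and $d(x)+d(y)$ maximum over all non-edges, and set $k=d(x)$. The Ore inequality together with $d(x)\le d(y)$ gives $k\le n/4$. For each of the $n/2-k$ non-neighbors $u$ of $x$ in $Y$, the extremal choice forces $d(u)\le d(y)\le n/2-k$; for each of the $n/2-d(y)\ge k$ non-neighbors $v$ of $y$ in $X$, it forces $d(v)\le d(x)=k$. Consequently $X$ contains at least $k$ vertices of degree at most $k$, which already gives $d_k\le k$; and since $k\le n/2-k$, combining these $k$ low-degree vertices of $X$ with the $n/2-k$ low-degree vertices of $Y$ yields at least $n/2$ vertices of degree at most $n/2-k$, so $d_{n/2}\le n/2-k$. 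Setting $i=k$ then produces the required index $i\le n/4$ with $d_i\le i$ and $d_{n/2}\le n/2-i$.

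The main obstacle I anticipate is Step 1: checking carefully that the index sets $S$ and $T$ both sit inside the same set of $n/2$ odd positions. This parity accounting is precisely where bipartiteness sharpens the classical Ore bound from $n$ down to $n/2$ and drives the whole argument. Once the bipartite Ore inequality is in hand, the remaining work of choosing the extremal non-edge, bounding $k\le n/4$, and counting low-degree vertices in $X$ and $Y$ separately to merge the two degree-sequence inequalities at the common index $i=k$ is routine bookkeeping.
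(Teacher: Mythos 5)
Your proof is correct. A point of context first: the paper offers no proof of this statement at all --- it is imported verbatim from Chv\'atal's ``On Hamilton's ideals'' --- so there is no internal argument to compare against; what you have written is essentially the standard closure proof from that source, specialized to the balanced bipartite setting. Both steps check out. In Step 1 the parity accounting is right: every neighbour of $x$ sits at an even position $i+1$ of the Hamilton path and every neighbour of $y$ at an odd position $i$, so $|S|=d(x)$, $|T|=d(y)$, both sets live inside the $n/2$ odd indices, and the crossing-chords construction shows they are disjoint, giving $d(x)+d(y)\le n/2$. In Step 2 the extremal non-edge yields $k\le n/4$, at least $k$ vertices of $X$ of degree at most $k$ (the non-neighbours of $y$, which include $x$), and $n/2-k$ vertices of $Y$ of degree at most $n/2-k$ (the non-neighbours of $x$); since $k\le n/2-k$ these two disjoint sets merge into $n/2$ vertices of degree at most $n/2-k$, giving $d_{n/2}\le n/2-k$ as required. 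Two details you leave implicit but which do hold: the reduction to an edge-maximal counterexample needs $n\ge 4$ so that $K_{n/2,n/2}$ is Hamiltonian (for $n=2$ the statement is degenerate, as no positive integer $i\le n/4$ exists), and you need $k\ge 1$ for $d_k$ to be meaningful --- this follows because an edge-maximal non-Hamiltonian graph has no isolated vertex, since adding a single edge at an isolated vertex cannot create a Hamilton cycle. Neither point affects the substance of the argument.
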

We prepare for the proof of Theorem 7 by proving Theorems 10 and 11 which give necessary degree conditions
(similar to those in Theorem 8) for the non-existence of a 2-factor in a bipartite graph $G$ of even order $n$ with equipartition $V(G) = X \cup Y$.
\\
\noindent
Let $G = (V,E)$ be a bipartite graph of even order $n$ and with equipartition $V(G) = X \cup Y$. For $U \subseteq X$ (respectively
$U \subseteq Y$) define $N^{(2)}(U)$ as being the multiset of vertices $v \in Y$ (respectively $v \in X$) such that $(u,v) \in E$ for
some $u \in U$ and with $v$ appearing twice in  $N^{(2)}(U)$ if there are two or more vertices  $u \in U$ with $(u,v) \in E$
and $v$ appearing once in  $N^{(2)}(U)$ if there is exactly one  $u \in U$ with $(u,v) \in E$.
We will use the following theorem by Ore \cite{Ore} that gives a necessary and sufficient condition for the non-existence of
a 2-factor in a bipartite graph of even order $n$ with equipartition $V(G) = X \cup Y$.
\begin{theorem} Let $G = (V,E)$ be a bipartite graph of even order $n$ and with equipartition $V(G) = X \cup Y$.
$G$ contains no 2-factor if and only if there exists some $U \subseteq X$ such that $|N^{(2)}(U)| < 2|U|$.
\end{theorem}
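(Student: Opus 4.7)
The plan is to deduce Ore's theorem from the max-flow / min-cut theorem applied to a natural network built on $G$. Introduce a source $s$ and sink $t$; for each $x\in X$ add the arc $s\to x$ with capacity $2$, for each $y\in Y$ add the arc $y\to t$ with capacity $2$, and for every edge $xy\in E(G)$ include the arc $x\to y$ with capacity $1$. Any integer feasible flow of value $2|X|$ saturates every $s$-$x$ and every $y$-$t$ arc, and flow conservation forces each $x\in X$ to distribute its $2$ units over two distinct capacity-$1$ arcs $x\to y$, and likewise for each $y\in Y$. Hence such flows correspond exactly to 2-factors of $G$, and $G$ has a 2-factor iff the minimum $s$-$t$ cut has capacity $2|X|$.

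For the forward (necessity) direction I would argue by direct counting. Given a 2-factor $F$ and $U\subseteq X$, the edges of $F$ incident to $U$ supply exactly $2|U|$ endpoints in $Y$, and each $y\in Y$ can absorb at most $\min(2,\deg_U(y))$ of them: at most $2$ by the 2-regularity of $F$, and at most $\deg_U(y)$ because $F\subseteq G$. Summing gives $2|U|\le \sum_{y\in Y}\min(2,\deg_U(y))=|N^{(2)}(U)|$, so no violating $U$ can exist.

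For the sufficiency direction I would compute the minimum cut explicitly. Any $s$-$t$ cut $(S,T)$ determines $U:=X\cap S\subseteq X$, and its capacity decomposes as $2|X\setminus U|+\sum_{y\in S\cap Y}2+\sum_{y\in T\cap Y}\deg_U(y)$. Because each $y\in Y$ can be assigned to $S$ or $T$ independently, its optimal contribution is $\min(2,\deg_U(y))$, so the minimum cut equals $\min_{U\subseteq X}\bigl[\,2|X\setminus U|+|N^{(2)}(U)|\,\bigr]=2|X|+\min_{U\subseteq X}\bigl[\,|N^{(2)}(U)|-2|U|\,\bigr]$. This is at least $2|X|$ exactly when $|N^{(2)}(U)|\ge 2|U|$ for every $U\subseteq X$, which is the contrapositive of the stated equivalence.

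The only real obstacle is a bookkeeping step: recognising that the optimal per-vertex contribution $\min(2,\deg_U(y))$ to the cut agrees with the multiplicity with which $y$ appears in the multiset $N^{(2)}(U)$ as defined in the paper. Once that identification is noted, the theorem drops out of max-flow / min-cut in a single line.
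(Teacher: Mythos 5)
Your argument is correct. Note first that the paper itself gives no proof of this statement: it is quoted as Ore's theorem from \cite{Ore} and used as a black box, so there is nothing in the text to compare against line by line; your max-flow/min-cut derivation is a sound, self-contained substitute. The network is set up correctly; the identification $|N^{(2)}(U)|=\sum_{y\in Y}\min(2,\deg_U(y))$ does match the paper's multiset definition; the correspondence between integral flows of value $2|X|$ and 2-factors implicitly uses the equipartition $|X|=|Y|$ (needed so that a flow of value $2|X|$ saturates all sink arcs and hence forces degree exactly $2$ at every $y$, making the support a $2$-regular spanning subgraph, i.e.\ a disjoint union of cycles), and integrality of max flow for integer capacities supplies the existence direction. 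The cut computation $2|X\setminus U|+2|S\cap Y|+\sum_{y\in T\cap Y}\deg_U(y)$ and the per-vertex optimization to $\min(2,\deg_U(y))$ are both right, and the direct counting argument for necessity, while redundant once the min-cut formula is in hand, is also correct. What your route buys over simply citing the source is a short proof from first principles (modulo max-flow/min-cut); Ore's original treatment goes through his general deficiency theory for degree-constrained subgraphs, which is heavier machinery than this special case requires.
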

For a bipartite graph $G = (V,E)$ of even order $n$ and with equipartition $V(G) = X \cup Y$,
a set $U \subseteq X$ or $U \subseteq Y$ is defined to be a {\em deficient} set of vertices
in $G$ if $|N^{(2)}(U)| < 2|U|$.
\\
\\
\noindent
We now prove four Lemmas that will be used in the proof of Theorems 10 and 11.
\begin{lemma} Let $G$ be a bipartite graph of even order $n$ and with equipartition $V(G) = X \cup Y$.
If $U$ is a minimal deficient set of vertices in $G$ then $2|U| - 2 \leq |N^{(2)}(U)|$.
\end{lemma}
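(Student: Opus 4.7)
The plan is to exploit minimality of $U$ in combination with a simple monotonicity property of the multiset-neighborhood operator $N^{(2)}$. Specifically, I would show two things: (i) for every $u \in U$, the set $U \setminus \{u\}$ is non-deficient, so $|N^{(2)}(U \setminus \{u\})| \geq 2(|U|-1) = 2|U| - 2$; and (ii) $|N^{(2)}(U)| \geq |N^{(2)}(U \setminus \{u\})|$ for any $u \in U$. Chaining these two inequalities yields the desired bound $|N^{(2)}(U)| \geq 2|U| - 2$.

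Step (i) is immediate from the minimality hypothesis and the definition of deficiency: ``deficient'' means $|N^{(2)}(\cdot)| < 2|\cdot|$, so ``non-deficient'' means $|N^{(2)}(\cdot)| \geq 2|\cdot|$; applying this to $U \setminus \{u\}$ gives the stated lower bound. (A brief aside may be warranted to note that $|U| \geq 1$, since the empty set has $|N^{(2)}(\emptyset)| = 0 \not< 0$, so is never deficient; hence the minimal deficient set is nonempty and removing one vertex is a well-defined operation.)

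Step (ii), the monotonicity, is the only point requiring actual verification and is where a reader might pause, so I would spell it out by a direct case analysis on each neighbor $v$ of $u$ (viewed as a potential element of the multiset). Writing $c_W(v)$ for the count of $v$ in $N^{(2)}(W)$, which equals $\min(|N(v)\cap W|,2)$, deleting $u$ from $U$ changes $c_U(v)$ only if $v$ is a neighbor of $u$, and in that case the count can drop by at most $1$ (from $1$ to $0$, or from $2$ to $1$) and otherwise stays the same (when $v$ already has $\geq 3$ neighbors in $U$, the cap at $2$ absorbs the change). Summing over all $v$ shows $|N^{(2)}(U \setminus \{u\})| \leq |N^{(2)}(U)|$.

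Combining (i) and (ii) finishes the proof: $|N^{(2)}(U)| \geq |N^{(2)}(U\setminus\{u\})| \geq 2|U| - 2$. The main ``obstacle'' is really just the bookkeeping in the monotonicity step, because the multiset cap at $2$ has to be handled correctly; no nontrivial extremal argument is needed.
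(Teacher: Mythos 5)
Your proof is correct and matches the paper's argument, which the authors compress to the single line ``Clear by the minimality of $U$'': removing one vertex cannot increase $|N^{(2)}(\cdot)|$, so a sub-minimal count for $U$ would make $U\setminus\{u\}$ deficient. Your write-up simply makes explicit the monotonicity bookkeeping and the nonemptiness of $U$ that the paper leaves implicit.
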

\begin{proof}
Clear by the minimality of $U$.
\end{proof}
\begin{lemma}
Let $G$ be a bipartite graph of even order $n$ and with equipartition $V(G) = X \cup Y$, and let
$U$ be a minimal deficient set of vertices in $G$. Let $M \subseteq N(U)$ be the set of vertices in $N(U)$
that are adjacent to exactly one vertex in $U$. Then, no vertex of $U$ is adjacent to more than one vertex of $M$.
\end{lemma}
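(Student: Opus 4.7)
The plan is to argue by contradiction, exploiting the minimality of $U$ together with the strict inequality $|N^{(2)}(U)| < 2|U|$. Suppose, for contradiction, that some $u \in U$ is adjacent to two distinct vertices $v_1, v_2 \in M$. Because each $v_i$ lies in $M$, its unique neighbor in $U$ is $u$, so $v_1$ and $v_2$ each contribute exactly $1$ to the multiset $N^{(2)}(U)$ but drop out of $N(U \setminus \{u\})$ entirely, contributing $0$ to $N^{(2)}(U \setminus \{u\})$.

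Tracking this change, I would conclude $|N^{(2)}(U \setminus \{u\})| \leq |N^{(2)}(U)| - 2$. Since $U$ is deficient, $|N^{(2)}(U)| \leq 2|U| - 1$, and hence
\[
|N^{(2)}(U \setminus \{u\})| \leq 2|U| - 3 < 2(|U|-1) = 2\,|U \setminus \{u\}|,
\]
so $U \setminus \{u\}$ is itself deficient, contradicting the minimality of $U$.

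I expect the only subtle point to be the multiset bookkeeping: one must verify that each $v_i$ truly drops out of the multiset $N^{(2)}(U \setminus \{u\})$ (this uses $v_i \in M$, i.e.\ that its $U$-neighborhood is exactly $\{u\}$, so removing $u$ leaves $v_i$ with no neighbor at all in the reduced set), and that any remaining neighbors of $u$ in $U$ can only further decrease the count of $N^{(2)}$ upon deletion of $u$, never inflate it. Once these observations are in hand, the two-line computation above closes the argument and no appeal to Lemma 1 is actually required, though Lemma 1 would provide an alternative way to package the same bound.
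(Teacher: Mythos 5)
Your proposal is correct and is essentially the paper's own argument: both remove $u$, observe that the two $M$-neighbors of $u$ each contribute one to $N^{(2)}(U)$ but nothing to $N^{(2)}(U-u)$, deduce $|N^{(2)}(U-u)| \leq |N^{(2)}(U)| - 2 < 2|U-u|$, and contradict the minimality of $U$. Your extra care with the multiset bookkeeping is exactly the step the paper leaves implicit.
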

\begin{proof}
If a vertex $u \in U$ is adjacent to two vertices of $M$, since $U$ is a deficient set of vertices in $G$,
we have that
$|N^{(2)}(U - u)| \leq |N^{(2)}(U)| - 2 < 2|U| - 2 = 2|U - u|$. This implies that $U - u$ is a deficient set
of vertices in $G$, which in turn contradicts the minimality of $U$.
\end{proof}
\begin{lemma}
Let $G$ be a bipartite graph of even order $n$ and with equipartition $V(G) = X \cup Y$, and suppose that $G$ does not contain
a 2-factor. If $U$ is a minimal deficient set in $G$ with $|U| = k$, then ${\rm deg}(u) \leq k$ for each $u \in U$ and
$|\{u \in U: {\rm deg}(u) \leq k - 1\}| \geq k -1$.
\end{lemma}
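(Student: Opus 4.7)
The plan is to parametrize the neighborhood $N(U)$ via the split into ``single'' versus ``multiple'' neighbors introduced in Lemma 2, and then read the degree bounds off from a two-sided estimate on $|N^{(2)}(U)|$.

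Concretely, let $M \subseteq N(U)$ be the set of vertices with exactly one neighbor in $U$, and set $a = |M|$ and $b = |N(U) \setminus M|$. From the multiset definition one has $|N^{(2)}(U)| = a + 2b$. Deficiency of $U$ gives $a + 2b \leq 2k-1$, so since $a \geq 0$ we obtain $b \leq k-1$. By Lemma 2 each $u \in U$ has at most one neighbor in $M$, hence $\deg(u) \leq b + 1 \leq k$, which is the first assertion.

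For the second assertion I would note that any $u \in U$ without a neighbor in $M$ already satisfies $\deg(u) \leq b \leq k-1$. The number of $u \in U$ that do have a neighbor in $M$ is exactly $a$: each $v \in M$ has a single neighbor in $U$, and by Lemma 2 no $u \in U$ can serve as that neighbor for two distinct vertices of $M$. I would then split on $b$. If $b \leq k-2$, then $b+1 \leq k-1$ and every vertex of $U$ has degree at most $k-1$. If instead $b = k-1$, then $a + 2b \leq 2k-1$ forces $a \leq 1$, so at most one vertex of $U$ can reach degree $k$, and the remaining (at least $k-1$) vertices have degree at most $k-1$.

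The main obstacle is not deep: this will be essentially a bookkeeping argument once Lemmas 1 and 2 are invoked. The one delicate step is recognizing that the number of $u \in U$ whose degree can exceed $b$ is bounded by $|M| = a$, and that $a$ is itself squeezed by the inequality $a + 2b \leq 2k-1$ coming from deficiency; everything else follows from the case analysis on $b$.
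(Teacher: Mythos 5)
Your proposal is correct and follows essentially the same route as the paper: both rest on Lemma 2 together with the split of $N(U)$ into the single-neighbor set $M$ and its complement, and on the deficiency bound $|N^{(2)}(U)| = |M| + 2|N(U)\setminus M| < 2k$. If anything, your explicit bookkeeping with $a$ and $b$ spells out more carefully the step the paper treats tersely (why two vertices of degree $k$ would force $|N^{(2)}(U)| \geq 2k$), so no changes are needed.
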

\begin{proof} Suppose that ${\rm deg}(u) \geq k + 1$ for some $u \in U$ and let
 $M \subseteq N(U)$ be the set of vertices in $N(U)$
that are adjacent to exactly one vertex in $U$. Then Lemma 2 implies that $u$ is adjacent to at most one vertex
in $M$ which implies that $u$ is adjacent to at least $k$ vertices in $N(U) - M$. This implies that
$|N^{(2)}(U)| \geq 2k$, which contradicts the assumption that $U$ is a deficient set. This proves that
${\rm deg}(u) \leq k$ for each $u \in U$. If two vertices in $U$ have degree $k$ then similarly Lemma 2 implies
that $|N^{(2)}(U)| \geq 2k$, which contradicts the assumption that $U$ is a deficient set. This proves the second
part of the Lemma.
\end{proof}
\begin{lemma}
Let $G = (V,E)$ be a bipartite graph of even order $n$ and with equipartition $V(G) = X \cup Y$ and suppose that $U \subseteq X$
is a minimal deficient set in $G$. Let $Y_{0} = \{v \in Y: v \not\in N(U)\}$, $Y_{1} = \{v \in Y: |U \cap N(v)| = 1\}$,
and $Y_{2} = \{v \in Y: |U \cap N(v)| \geq 2\}$. Let $U^{*} = Y_{0} \cup Y_{1}$. Then $U^{*}$ is a deficient set in $G$.
\end{lemma}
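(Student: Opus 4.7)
The plan is to upper-bound $|N^{(2)}(U^*)|$ by splitting $X$ into $U$ and $X\setminus U$ and handling the two sides separately, then to compare the result with $2|U^*|$ after eliminating $|Y_0|$ via the equipartition identity $|Y_0|+|Y_1|+|Y_2|=n/2$. The hope is that the desired deficiency of $U^*$ will reduce cleanly to the given deficiency of $U$.

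For the contribution of $U$, note that by the definition of $Y_0$ we have $N(u)\cap Y_0=\emptyset$ for every $u\in U$, hence $N(u)\cap U^* = N(u)\cap Y_1$. The set $Y_1$ is precisely the set $M$ appearing in Lemma 2 (vertices of $N(U)$ with exactly one neighbor in $U$), so Lemma 2 gives $|N(u)\cap Y_1|\leq 1$. Summing over $u\in U$, the contribution of $U$ to $|N^{(2)}(U^*)|$ is at most the number of edges between $U$ and $Y_1$, which equals $|Y_1|$ since each $v\in Y_1$ has exactly one neighbor in $U$. For the contribution of $X\setminus U$, the trivial bound of at most $2$ per vertex gives at most $2|X\setminus U|=n-2k$, where $k=|U|$. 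Thus $|N^{(2)}(U^*)|\leq |Y_1|+n-2k$.

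To finish, $2|U^*|=2|Y_0|+2|Y_1|$, and substituting $|Y_0|=n/2-|Y_1|-|Y_2|$ reduces the desired inequality $|N^{(2)}(U^*)|<2|U^*|$ to $|Y_1|+2|Y_2|<2k$. Since $|Y_1|+2|Y_2|=|N^{(2)}(U)|$, this is exactly the hypothesis that $U$ is deficient, and the proof is complete. No serious obstacle arises; the one conceptual step is recognizing that $Y_1$ coincides with the set $M$ of Lemma 2, which is what caps each $u\in U$ at multiplicity $1$ in $N^{(2)}(U^*)$ and leaves just enough slack for the deficiency of $U$ to transfer to $U^*$.
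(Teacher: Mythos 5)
Your proof is correct and follows essentially the same route as the paper's: both bound $|N^{(2)}(U^{*})|$ by $2|X\setminus U| + |Y_{1}|$ (using Lemma 2 to cap each $u\in U$ at multiplicity one, with $|Y_1|$ equal to the paper's $|X_1|$) and then reduce the deficiency of $U^{*}$ to that of $U$ via the equipartition identity. The only difference is cosmetic: you eliminate $|Y_{0}|$ directly from $|Y_0|+|Y_1|+|Y_2|=n/2$, while the paper routes the same count through the sets $X_{0},X_{1},X_{2}$.
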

\begin{proof}
Let $X_{0} = X - U, X_{1} = \{u \in U: (u,v) \in E \ {\rm for\  some\ } v \in Y_{1}\}$, and $X_{2} = U - X_{1}$.
Note that $|X| = |Y|$ implies that $|X_{0}| + |X_{1}| + |X_{2}| = |Y_{0}| + |Y_{1}| + |Y_{2}|$. Now, since by Lemma 2 we have that $|X_{1}| = |Y_{1}|$,
this implies that $|X_{0}| + |X_{2}| = |Y_{0}| + |Y_{2}|$. Since $U$ is a deficient set we have that $|N^{(2)}(U)| =
|Y_{0}| + 2|Y_{2}| < 2|U| = 2(|X_{1}| + |X_{2}|$. Hence, $|Y_{1}| + 2(|X_{0}| + |X_{2}| - |Y_{0}|) < 2(|X_{1}| + |X_{2}|)$, which in turn implies
that $2|X_{0}| + |X_{1}| < 2(|Y_{0}| + |Y_{1}|)$. This proves that $U^{*}$ is a deficient set in $G$.
\end{proof}
\\
\\
\noindent
We are now ready to prove two theorems which give necessary degree conditions
(similar to those in Theorem 8) for the non-existence of a 2-factor in a bipartite graph $G$ of even order $n$ with equipartition $V(G) = X \cup Y$.
\begin{theorem}
Let $G$ be a bipartite graph of even order $n = 4s \geq 12$ and with equipartition $V(G) = X \cup Y$.
Let $(d_{1},d_{2},\ldots,d_{n})$ be the degree sequence of $G$ with
$d_{1} \leq d_{2}\leq \ldots \leq d_{n}$. If $G$ does not contain a 2-factor, then either
\begin{itemize}
\item[(1)] for some $k \leq \frac{n}{4}$ we have that $d_{k} \leq k$ and $d_{k-1} \leq k - 1$, or,
\item[(2)] $d_{\frac{n}{4} - 1} \leq \frac{n}{4} - 1$.
\end{itemize}
\end{theorem}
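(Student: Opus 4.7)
The plan is to apply Ore's criterion (Theorem 9) to produce a deficient set, then analyze it using Lemmas 2--4. Take $U$ to be a deficient set of \emph{minimum size} among the deficient subsets of $X$ and $Y$; without loss of generality $U \subseteq X$ and set $k=|U|$. Minimum size forces $U$ to be minimal in the subset sense (so Lemma 3 applies, giving $d_k\leq k$ and $d_{k-1}\leq k-1$) and also forces every deficient set in $Y$ to have size $\geq k$. If $k\leq n/4$, the Lemma 3 bounds are exactly conclusion (1).

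The substantial case is $k\geq n/4+1$, where the goal is to exhibit at least $n/4-1$ vertices of degree $\leq n/4-1$ to obtain conclusion (2). Apply Lemma 4 to obtain a deficient set $U^{*}=Y_{0}\cup Y_{1}\subseteq Y$; the choice of $U$ then gives $|U^{*}|\geq k$. The structural degree bounds that drive the argument are: $\deg(v)\leq n/2-k$ for $v\in Y_{0}$, $\deg(v)\leq n/2-k+1$ for $v\in Y_{1}$, $\deg(u)\leq |Y_{2}|$ for $u\in X_{2}$, and $\deg(u)\leq 1+|Y_{2}|$ for $u\in X_{1}$, where Lemma 2 supplies $|X_{1}|=|Y_{1}|$ and the defining properties of $X_{2}$ and $Y_{0}$ ensure $X_{2}$ has no neighbours in $Y_{0}\cup Y_{1}$.

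When $k\geq n/4+2$, every vertex of $U^{*}$ already has degree at most $n/2-k+1\leq n/4-1$, and $|U^{*}|\geq k>n/4-1$ immediately yields (2). The threshold case $k=n/4+1$ is the real work: here the $Y_{1}$-vertices are only forced to have degree $\leq n/4$, so I split on $|Y_{2}|$ using the deficiency inequality $|Y_{1}|+2|Y_{2}|\leq 2k-1=n/2+1$. Depending on whether $|Y_{2}|\leq n/4-2$, $|Y_{2}|=n/4-1$, or $|Y_{2}|=n/4$, the low-degree vertices come from all of $U$, from the disjoint collection $X_{2}\cup Y_{0}$, or from $Y_{0}$ alone; the option $|Y_{2}|>n/4$ is immediately ruled out. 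The tightest sub-case is $|Y_{2}|=n/4-1$, where $|Y_{1}|\leq 3$ combined with $|X_{2}|=|Y_{0}|=n/4+1-|Y_{1}|$ supplies $n/2+2-2|Y_{1}|\geq n/2-4$ low-degree vertices, which is $\geq n/4-1$ precisely when $n\geq 12$.

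The principal obstacle is exactly this threshold case $k=n/4+1$: the clean argument via $U^{*}$ falls one short for the $Y_{1}$-vertices, forcing the finer trade-off between $X_{2}$ and $Y_{0}$. The hypothesis $n\geq 12$ enters the statement precisely to make the inequality $n/2-4\geq n/4-1$ hold in the $|Y_{2}|=n/4-1$ sub-case.
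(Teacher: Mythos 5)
Your proposal is correct and follows essentially the same route as the paper: Ore's criterion to extract a minimum/minimal deficient set $U$, Lemma 3 when $|U|\leq n/4$, and Lemma 4's dual set $U^{*}=Y_{0}\cup Y_{1}$ together with the $X_{0},X_{1},X_{2},Y_{0},Y_{1},Y_{2}$ decomposition for the large case, landing on the same tight configuration $|U|=n/4+1$, $|Y_{2}|=n/4-1$, $|Y_{1}|\leq 3$, where $X_{2}\cup Y_{0}$ supplies $n/2-4\geq n/4-1$ vertices of degree at most $n/4-1$. Your organization of the case analysis (separating $k\geq n/4+2$ and then splitting on $|Y_{2}|$) is a slightly cleaner packaging of the same argument, but not a genuinely different proof.
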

\begin{proof}
We will prove that for some $k \leq \frac{n}{4}$, $G$ contains $k$ vertices with degree at most $k$,
and that of these $k$ vertices, $(k-1)$ vertices have degree at most $(k-1)$, or, that $G$ contains at least
$\frac{n}{4} - 1$ vertices of degree at most $\frac{n}{4} - 1$.
\\
\noindent
Since $G$ does not contain a 2-factor, Theorem 9 implies that $G$ contains a deficient set of vertices. Let
$U \subseteq X$ be a minimal deficient set of vertices in $G$. If $|U| \leq \frac{n}{4}$, then Lemma 3 implies
that statement (1) is true and the result holds.
\\
\noindent
Now suppose that $|U| > \frac{n}{4}$. As in the statement of Lemma 4, let
 $Y_{0} = \{v \in Y: v \not\in N(U)\}$, $Y_{1} = \{v \in Y: |U \cap N(v)| = 1\}$,
and $Y_{2} = \{v \in Y: |U \cap N(v)| \geq 2\}$. Let $U^{*} = Y_{0} \cup Y_{1}$. Then Lemma 4 implies that
$U^{*}$ is a deficient set in $G$. If $|U^{*}| \leq \frac{n}{4}$ then again statement (1) is true and the result holds.
\\
\noindent
Now suppose that $|U^{*}| > \frac{n}{4}$,
and as in the proof of Lemma 4, let
$X_{0} = X - U, X_{1} = \{u \in U: (u,v) \in E \ {\rm for\  some\ } v \in Y_{1}\}$, and $X_{2} = U - X_{1}$.
We have that ${\rm deg}(u) \leq 1 + |Y_{2}|$ for each $u \in U$, and hence we may assume that $|Y_{2}| \geq \frac{n}{4} - 1$, else the result
holds. Similarly, since ${\rm deg}(u) \leq 1 + |X_{0}|$ for each $u \in U^{*}$,
we may assume that $|X_{0}| \geq \frac{n}{4} - 1$. Note that $|U| > \frac{n}{4}$ and
$|X_{0}| \geq \frac{n}{4} - 1$ implies that $|U| = \frac{n}{4} + 1$, and that
$|U^{*}| > \frac{n}{4}$ and
$|Y_{2}| \geq \frac{n}{4} - 1$ implies that $|U^{*}| = \frac{n}{4} + 1$. Now, since $U$ is a minimal deficient set of vertices
in $G$, Lemma 1 implies that $|X_{1}| = 2$ or $X_{1} = 3$. If $|X_{1}| = 2$ then at least $\frac{n}{4} - 1$ of the vertices in $U$
must have degree at most $\frac{n}{4} - 1$, and statement (2) of the theorem is true.
Finally, if $|X_{1}| = 3$ then at least $\frac{n}{2} - 4$ (and hence at least $\frac{n}{4} - 1$ because $n \geq 12$) of the vertices in each of $U$
and $U^{*}$ 
must have degree at most $\frac{n}{4} - 1$, and statement (2) of the theorem is true.
\end{proof}
\begin{theorem}
Let $G$ be a bipartite graph of even order $n = 4s + 2\geq 14$ and with equipartition $V(G) = X \cup Y$.
Let $(d_{1},d_{2},\ldots,d_{n})$ be the degree sequence of $G$ with
$d_{1} \leq d_{2}\leq \ldots \leq d_{n}$. If $G$ does not contain a 2-factor, then either
\begin{itemize}
\item[(1)] for some $k \leq \frac{(n - 2)}{4}$ we have that $d_{k} \leq k$ and $d_{k-1} \leq k - 1$, or,
\item[(2)] $d_\frac{(n-2)}{2} \leq \frac{(n-2)}{4}$.
\end{itemize}
\end{theorem}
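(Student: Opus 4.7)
The plan is to mirror the proof of Theorem~10, adapting for the parity $n=4s+2$. Write $s:=(n-2)/4$, so $|X|=|Y|=2s+1$. Since $G$ has no 2-factor, Theorem~9 supplies a deficient set; I take $U$ to be a minimum-size deficient set over all deficient subsets of $X\cup Y$, and assume without loss of generality $U\subseteq X$. If $|U|\le s$, Lemma~3 immediately gives option~(1) with $k=|U|$. Otherwise $|U|\ge s+1$, and then any minimal deficient subset of $U^{*}:=Y_0\cup Y_1$ (which is deficient by Lemma~4) is itself a deficient set, so by the minimum-size property its size is $\ge s+1$, forcing $|U^{*}|\ge s+1$.

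\smallskip

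From Lemma~2 and the definitions, $\deg(u)\le 1+|Y_2|$ for $u\in U$, $\deg(v)\le|X_0|$ for $v\in Y_0$, and $\deg(v)\le 1+|X_0|$ for $v\in Y_1$. If both $|Y_2|\le s-1$ and $|X_0|\le s-1$, every vertex of $U\cup U^{*}$ has degree at most $s$, producing $|U|+|U^{*}|\ge 2s+2\ge 2s$ such vertices, so option~(2) holds. Otherwise I may assume $|Y_2|\ge s$ (the analogous case $|X_0|\ge s,\ |Y_2|\le s-1$ is treated by the mirror argument with the roles of $U$ and $U^{*}$ interchanged). From $|Y_0|+|Y_1|+|Y_2|=2s+1$ and $|U^{*}|\ge s+1$ this forces $|Y_2|=s$ and $|U^{*}|=s+1$.

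\smallskip

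Next I split on $|X_0|$. If $|X_0|\ge s$, the arithmetic $|X|=2s+1$ forces $|U|=s+1$ and $|X_0|=s$; Lemma~1 applied to $U$ pins $|Y_1|\in\{0,1\}$, and the $|X_2|+|Y_0|=2(s+1-|Y_1|)\ge 2s$ vertices of degree $\le s$ give option~(2). If $|X_0|\le s-1$, write $|U|=s+k$ with $k\ge 2$; Lemma~1 together with the deficiency inequality $|N^{(2)}(U)|<2|U|$ pins $|Y_1|\in\{2k-2,\,2k-1\}$ and $|X_0|=s+1-k$. For $k=2$, a direct count $|X_2|+|Y_0|+|Y_1|\ge 2s$ yields option~(2). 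For $k\ge 3$, the sharper bound $|X_0|\le s-2$ forces $\deg(v)\le s-2$ for $v\in Y_0$ and $\deg(v)\le s-1$ for $v\in Y_1$, so all $s+1$ vertices of $U^{*}$ have degree $\le s-1$; this gives $d_{s-1}\le s-1$ and $d_s\le s$, i.e., option~(1) with $k=s=(n-2)/4$.

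\smallskip

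The hard part is the boundary at $k=3$: when $|Y_1|=2k-1=5$ the direct count for option~(2) comes up exactly one vertex short, and one must pivot to option~(1) by exploiting the jump in the $|X_0|$ bound from $s-1$ to $s-2$ that occurs precisely when $k$ reaches $3$. One also needs to handle the symmetric case $|X_0|\ge s,\ |Y_2|\le s-1$ with care (unlike in Theorem~10 it does not immediately reduce to the $|Y_2|\ge s$ case but runs through an analogous mirror analysis), and to verify the small-$s$ boundary at $s=3,\ n=14$, where some subcases degenerate, in the spirit of the $n\ge 12$ restriction in Theorem~10.
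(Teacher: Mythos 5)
Your overall strategy is the paper's: a minimum-cardinality deficient set $U$ (giving option (1) via Lemma~3 when $|U|\le s$), the derived deficient set $U^{*}=Y_0\cup Y_1$, the degree bounds $\deg(u)\le 1+|Y_2|$ on $U$ and $\deg(v)\le 1+|X_0|$ on $U^{*}$, and counts of low-degree vertices. Your opening case ($|Y_2|\le s-1$ and $|X_0|\le s-1$ simultaneously) cleanly absorbs the paper's case $|U|=s+2$, and your branch $|Y_2|=s$ reproduces the paper's subcases $|X_1|\in\{0,1\}$. However, inside that branch the sub-split $|X_0|\le s-1$, i.e.\ $|U|=s+k$ with $k\ge 2$, is vacuous: there $|U^{*}|=s+1<|U|$, contradicting your choice of $U$ as minimum-size over all deficient subsets of $X\cup Y$. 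So everything you flag as ``the hard part at $k=3$'' is an analysis of the empty case; not a logical error, but the effort lands where nothing needs proving.

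The genuine gap is the case $|X_0|\ge s$, $|Y_2|\le s-1$, which you defer to ``the mirror argument with the roles of $U$ and $U^{*}$ interchanged.'' That symmetry is not available: the bound $\deg(u)\le 1+|Y_2|$ comes from Lemma~2 and requires $U$ to be a \emph{minimal} deficient set, whereas $U^{*}$ is only known to be deficient of size $\ge |U|$, so Lemma~2 cannot be applied to it; moreover your branch hypothesis concerns $|X_0|=|X\setminus U|$, which is not the mirror image of $|Y_2|$. This is exactly where the paper's nontrivial subcases (1)c and (1)d live. It must be handled directly: after discarding $|Y_2|\le s-2$ (which gives option (1) outright, since all $s+1$ vertices of $U$ then have degree $\le s-1$), one is left with $|U|=s+1$, $|X_0|=s$, $|Y_2|=s-1$, and Lemma~1 applied to $U$ forces $|Y_1|\in\{2,3\}$; then $X_2\cup X_1\cup Y_0$ consists of $(s+1)+(s+2-|Y_1|)=2s+3-|Y_1|\ge 2s$ vertices each of degree at most $s$ (using $\deg\le|Y_2|=s-1$ on $X_2$, $\deg\le 1+|Y_2|=s$ on $X_1$, and $\deg\le|X_0|=s$ on $Y_0$), which yields option (2). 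With that paragraph inserted in place of the appeal to symmetry, your proof closes and coincides with the paper's.
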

\begin{proof}
We will prove that for some $k \leq \frac{n}{4}$, $G$ contains $k$ vertices with degree at most $k$,
and that of these $k$ vertices, $(k-1)$ vertices have degree at most $(k-1)$, or, that $G$ contains at least
$\frac{(n-2)}{2}$ vertices of degree at most $\frac{(n-2)}{4}$.
\\
\noindent
Since $G$ does not contain a 2-factor, Theorem 9 implies that $G$ contains a deficient set of vertices. Without loss of
generality let
$U \subseteq X$ be a minimum cardinality deficient set of vertices in $G$. If $|U| \leq \frac{(n - 2)}{4}$, then Lemma 3 implies
that statement (1) is true and the result holds.
\\
\noindent
Now suppose that $|U| > \frac{(n - 2)}{4}$. As in the statement of Lemma 4, let
 $Y_{0} = \{v \in Y: v \not\in N(U)\}$, $Y_{1} = \{v \in Y: |U \cap N(v)| = 1\}$,
and $Y_{2} = \{v \in Y: |U \cap N(v)| \geq 2\}$. Let $U^{*} = Y_{0} \cup Y_{1}$. Then Lemma 4 implies that
$U^{*}$ is a deficient set in $G$. Since $U$ is a minimum cardinality deficient set of vertices in $G$,
 we have that$|U^{*}| \geq |U| > \frac{(n - 2)}{4}$.
\\
\noindent
Now, as in the proof of Lemma 4, let
$X_{0} = X - U, X_{1} = \{u \in U: (u,v) \in E \ {\rm for\  some\ } v \in Y_{1}\}$, and $X_{2} = U - X_{1}$.
We have that ${\rm deg}(u) \leq 1 + |Y_{2}|$ for each $u \in U$, and hence we may assume that $|Y_{2}| \geq \frac{(n - 2)}{4} - 1$, else the result
holds. Similarly, since ${\rm deg}(u) \leq 1 + |X_{0}|$ for each $u \in U^{*}$,
we may assume that $|X_{0}| \geq \frac{(n - 2)}{4} - 1$.
 Note that $|U| > \frac{(n - 2)}{4}$ and
$|X_{0}| \geq \frac{(n - 2)}{4} - 1$ implies that $\frac{(n - 2)}{4} + 1 \leq |U| \leq \frac{(n - 2)}{4} + 2$.
We now examine the two cases: $|U| = \frac{(n - 2)}{4} + 1$ and $|U| = \frac{(n - 2)}{4} + 2$.
\begin{itemize}
\item[(1)] $|U| = \frac{(n - 2)}{4} + 1$. In this case we must have that $|X_{0}| = \frac{(n - 2)}{4}$.
Note that $|X_{1}| \leq 3$ because if $|X_{1}| \geq 4$ then since $U$ is a minimal deficient set of vertices,
we would have that $|Y_{2}| \leq  \frac{(n - 2)}{4} - 2$, a contradiction to the assumption at this point that
$|Y_{2}| \geq  \frac{(n - 2)}{4} - 1$. We now examine the following four subcases separately.
\begin{itemize}
\item[(1)a] $|X_{1}| = 0$. In this case we have that $|Y_{1}| = 0$ and $|X_{2}| = \frac{(n - 2)}{4} + 1$.
Since $U$ is a minimal deficient set of vertices, Lemma 1 implies that $|Y_{2}| = \frac{(n - 2)}{4}$ and $|Y_{0}| = \frac{(n - 2)}{4} + 1$.
Thus, $X_{2} \cup Y_{0}$ is a set of $\frac{n}{2} + 1$ vertices of degree at most $\frac{(n - 2)}{4}$ which meets the requirement
of the theorem..
\item[(1)b] $|X_{1}| = 1$. In this case we have that $|Y_{1}| = 1$ and $|X_{2}| = \frac{(n - 2)}{4}$.
Since $U$ is a minimal deficient set of vertices, Lemma 1 implies that $|Y_{2}| = \frac{(n - 2)}{4}$ and $|Y_{0}| = \frac{(n - 2)}{4}$.
Thus, $X_{2} \cup Y_{0}$ is a set of $\frac{n}{2} + 1$ vertices of degree at most $\frac{(n - 2)}{4}$ each as required by the theorem.
\item[(1)c] $|X_{1}| = 2$. In this case we have that $|Y_{1}| = 2$ and $|X_{2}| = \frac{(n - 2)}{4} - 1$.
Since $U$ is a minimal deficient set of vertices, Lemma 1 implies that $|Y_{2}| = \frac{(n - 2)}{4} - 1$ and $|Y_{0}| = \frac{(n - 2)}{4}$.
Thus, $X_{2} \cup X_{1} \cup Y_{0}$ is a set of $\frac{n}{2}$ vertices of degree at most $\frac{(n - 2)}{4}$ which meets the requirement
of the theorem.
\item[(1)d] $|X_{1}| = 3$. In this case we have that $|Y_{1}| = 3$ and $|X_{2}| = \frac{(n - 2)}{4} - 2$.
Since $U$ is a minimal deficient set of vertices, Lemma 1 implies that $|Y_{2}| = \frac{(n - 2)}{4} - 1$ and $|Y_{0}| = \frac{(n - 2)}{4} - 1$.
Thus, $X_{2} \cup X_{1} \cup Y_{0}$ is a set of $\frac{n}{2} - 1$ vertices of degree at most $\frac{(n - 2)}{4}$ as required by the theorem.
\end{itemize}
\item[(2)] $|U| = \frac{(n - 2)}{4} + 2$. In this case we have that $|X_{0}| = \frac{(n - 2)}{4} - 1$. Since $U$ is a minimum cardinality
deficient set of vertices, we also have that $|U^{*}| = |U| = \frac{(n - 2)}{4} + 2$. Hence we now have that $|Y_{2}| = |X_{0}| =
\frac{(n - 2)}{4} - 1$. Thus, $U \cup U^{*}$ is a set of $\frac{n}{2} + 3$ vertices of degree at most $\frac{(n - 2)}{4}$ which meets the requirement
of the theorem.
\end{itemize}
\end{proof}

\begin{lemma} Let $x, y, r$ be positive numbers such that $x \geq y$ and $r < y$. Then
$\frac{(x+r)(x-r)}{(y+r)(y-r)} \geq {(\frac{x}{y})}^{2}$.
\end{lemma}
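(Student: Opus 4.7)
The plan is to reduce the inequality to its difference-of-squares form and then clear denominators, at which point the claim collapses to $x \geq y$. Writing $(x+r)(x-r) = x^2 - r^2$ and $(y+r)(y-r) = y^2 - r^2$, the desired inequality becomes
\[
\frac{x^2 - r^2}{y^2 - r^2} \;\geq\; \frac{x^2}{y^2}.
\]
The first step is to verify that both denominators are strictly positive: $y^2 - r^2 > 0$ because $0 < r < y$, and $y^2 > 0$ by hypothesis. This justifies cross-multiplying without reversing the inequality.

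Next I would cross-multiply to obtain the equivalent statement
\[
y^2(x^2 - r^2) \;\geq\; x^2(y^2 - r^2).
\]
Expanding both sides gives $x^2 y^2 - r^2 y^2$ on the left and $x^2 y^2 - r^2 x^2$ on the right. Cancelling the common term $x^2 y^2$ and negating reduces the inequality to
\[
r^2 x^2 \;\geq\; r^2 y^2,
\]
which, since $r > 0$, is equivalent to $x^2 \geq y^2$. This holds because $x \geq y > 0$, so squaring preserves the order. Reversing the chain of equivalences then yields the lemma.

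There is no real obstacle here; the only care required is to track the sign conditions so that cross-multiplication is valid, and this is handled by the hypothesis $r < y \leq x$. I would present the proof as a short chain of equivalent inequalities ending in $x^2 \geq y^2$.
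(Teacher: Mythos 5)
Your proof is correct and follows essentially the same route as the paper, which simply states the cross-multiplied inequality $y^{2}(x^{2}-r^{2}) \geq (y^{2}-r^{2})x^{2}$ and notes that the result follows. You have merely filled in the sign checks and the expansion that the paper leaves implicit.
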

\begin{proof}
$y^{2}(x^{2} - r^{2}) \geq (y^{2} - r^{2})x^{2}$, so the result follows.
\end{proof}
\\
\\
\noindent
\begin{mainproof}
For an equipartition of $V(D)$ into $V(D) = X \cup Y$,
let $B(X \rightarrow Y)$ be the bipartite directed graph with vertex set $V(D)$, equipartition
$V(D) = X \cup Y$, and with $(x,y) \in A(B(X \rightarrow Y))$ if and only if $x \in X$, $y \in Y$, and, $(x,y) \in A(D)$.
Let $B(X,Y)$ denote the bipartite graph underlying $B(X \rightarrow Y)$. It is clear that
$B(X,Y)$ contains a Hamilton cycle if and only if $B(X \rightarrow Y)$ contains an anti-directed Hamilton cycle.
We will prove that there exists an equipartition of $V(D)$ into $V(D) = X \cup Y$ such that $B(X,Y)$ contains a Hamilton
cycle.

In the argument below, we make the simplifying assumption that $d^{+}(v) = d^{-}(v) = \delta(D)$ for each $v \in V(D)$.
It is straightforward (see the remark at the end of the proof) to see that the argument extends to the case in which some indegrees or outdegrees are greater than $\delta(D)$.
\\
\noindent
 Let $v \in V(D)$. Let $n_{k}$ denote the number of equipartitions of $V(D)$ into $V(D) = X \cup Y$
for which ${\rm deg}(v,B(X,Y)) = k$. Since $v \in X$ or $v \in Y$ and since $d^{+}(v) = d^{-}(v) = \delta(D)$, we have that
$n_{k} = 2{\delta \choose k}{n - \delta - 1 \choose \frac{n}{2} - k}$. Note that if $k > \frac{n}{2}$ or
if $k < \delta - \frac{n}{2} +1$ then $n_{k} = 0$.
Thus the total number of equipartitions of $V(D)$ into $V(D) = X \cup Y$ is
\begin{equation}
T = \sum_{k = \delta - \frac{n}{2} +1}^{\frac{n}{2}} n_{k} = \sum_{k = \delta - \frac{n}{2} +1}^{\frac{n}{2}}2{\delta \choose k}{n - \delta - 1 \choose \frac{n}{2} - k} = {n \choose \frac{n}{2}}.
\end{equation}
Denote by $N = {n \choose \frac{n}{2}}$ the total number of equipartitions of $V(D)$. For a particular equipartition of $V(D)$ into $V(D) = X_{i} \cup Y_{i}$, let
$(d_{1}^{(i)},d_{2}^{(i)},\ldots,d_{n}^{(i)})$ be the degree sequence of $B(X_{i},Y_{i})$ with
$d_{1}^{(i)} \leq d_{2}^{(i)}\leq \ldots \leq d_{n}^{(i)}$, $i = 1,2,\ldots, N$, and, let $P_{i} = \{j: d_{j}^{i} \leq \frac{n}{4}\}$.
If $B(X_{i},Y_{i})$ does not contain a Hamilton
cycle then Theorem 8 implies that there exists $k \leq \frac{n}{4}$ such that $d_{k}^{i} \leq k$ and hence,
$|\{d_{j}^{i} : d_{j}^{i} \leq k, j = 1,2,\ldots,n\}| \geq k$. This in turn implies that
$\sum_{j\in P_{i}} \frac{1}{d_{j}^{i}} \geq 1$.
Hence, the number of equipartitions of $V(D)$ into $V(D) = X \cup Y$ for which
$B(X,Y)$ does not contain a Hamilton cycle is at most
\begin{equation}
S = n\left(\frac{n_{2}}{2} + \frac{n_{3}}{3} + \ldots + \frac{n_{\lfloor \frac{n}{4}\rfloor}}{\lfloor\frac{n}{4}\rfloor}\right)
\end{equation}
Thus, to show that there exists an equipartition of $V(D)$ into $V(D) = X \cup Y$ such that $B(X,Y)$ contains a Hamilton
cycle, it suffices to show that $T > S$, i.e.,
\begin{equation}
\sum_{k = \delta - \frac{n}{2} +1}^{\frac{n}{2}}2{\delta \choose k}{n - \delta - 1 \choose \frac{n}{2} - k}
> n \sum_{k = 2}^{\lfloor \frac{n}{4}\rfloor} \frac{2{\delta \choose k}{n - \delta - 1 \choose \frac{n}{2} - k}}{k}
\end{equation}
We break the proof of (3) into three cases.
\\
\noindent
{\bf Case 1}: $n = 4m$ and $\delta = 2d$ for some positive integers $m$ and $d$.
\newline
For $i = 0,1,\ldots,\frac{n}{4} - 2$, let $A_{i} = n_{(d + i)} = 2{\delta \choose d + i}{n - \delta - 1 \choose 2m - d - i}$,
and let $B_{i} = n_{(\frac{n}{4}-i)} = 2{\delta \choose m - i}{n - \delta - 1 \choose m + i}$.
Clearly, (3) is satisfied if we can show that
\begin{equation}
A_{i} > \frac{nB_{i}}{\frac{n}{4} - i},\  {\rm for\  each}\  i = 0,1,\ldots,\frac{n}{4} - 2.
\end{equation}
We prove (4) by recursion on $i$.
We first show that $A_{0} > \frac{nB_{0}}{\frac{n}{4}}$, i.e. $n_{\frac{\delta}{2}} > n\left(\frac{n_{\frac{n}{4}}}{\frac{n}{4}}\right)
= 4n_{\frac{n}{4}}$. Let $\delta = \frac{n}{2} + s$. We have that
\begin{eqnarray}
\frac{A_{0}}{B_{0}}& = &\frac{(\frac{n}{4})!(\delta - \frac{n}{4})!(\frac{n}{4})!(\frac{3n}{4} - \delta - 1)!}
{\frac{\delta}{2}!\frac{\delta}{2}!(\frac{n}{2}-\frac{\delta}{2})!(\frac{n}{2}-\frac{\delta}{2}- 1)!} \nonumber \\
&=& \frac{(\frac{n}{4})!(\frac{n}{4} + s)!(\frac{n}{4})!(\frac{n}{4} - s -1)!}{(\frac{n}{4} + \frac{s}{2})!(\frac{n}{4} + \frac{s}{2})!
(\frac{n}{4} - \frac{s}{2})!
(\frac{n}{4} - \frac{s}{2} - 1)!} \nonumber \\
&=&\frac{(\frac{n}{4} + s)(\frac{n}{4} + s - 1)\ldots(\frac{n}{4} + \frac{s}{2} + 1)(\frac{n}{4})(\frac{n}{4} - 1)\ldots(\frac{n}{4} - \frac{s}{2} + 1)}{(\frac{n}{4} + 1)(\frac{n}{4} + 2)\ldots (\frac{n}{4} + \frac{s}{2})(\frac{n}{4} - \frac{s}{2} - 1)(\frac{n}{4} - \frac{s}{2} - 2)\ldots(\frac{n}{4} - s)}\nonumber
\end{eqnarray}
\noindent
Now, applications of Lemma 1 give
\begin{eqnarray}
\frac{A_{0}}{B_{0}}& \geq &\frac{{(\frac{n}{4} + \frac{3s}{4} + \frac{1}{2})}^{\frac{s}{2}}}
{{(\frac{n}{4} + \frac{s}{4} + \frac{1}{2})}^{\frac{s}{2}}}
\frac{{(\frac{n}{4} - \frac{s}{4} + \frac{1}{2})}^{\frac{s}{2}}}
{{(\frac{n}{4} - \frac{3s}{4} - \frac{1}{2})}^{\frac{s}{2}}}\nonumber \\
& \geq & \frac{{(\frac{n}{4} + \frac{s}{4} + \frac{1}{2})}^{s}}
{{(\frac{n}{4} - \frac{s}{4})}^{s}}
\end{eqnarray}
Since $\delta \geq pn$, we have that $s = \delta - \frac{n}{2} \geq (p - \frac{1}{2})n$.
Thus, (5) gives
\begin{equation}
\frac{A_{0}}{B_{0}} \geq {\left(\frac{\frac{n}{4} + \frac{(p - \frac{1}{2})n}{4}}{\frac{n}{4} -\frac{(p - \frac{1}{2})n}{4} }\right)}
^{\left(p - \frac{1}{2}\right)n}= {\left(\frac{p + \frac{1}{2}}{\frac{3}{2} - p}\right)}^{\left(p - \frac{1}{2}\right)n}
\end{equation}
Because $ n > \frac{{\rm ln}(4)}{\left(p - \frac{1}{2}\right){\rm ln}\left(\frac{p + \frac{1}{2}}{\frac{3}{2} - p}\right)}$, (6) implies that
$\frac{A_{0}}{B_{0}} > 4$, thus proving (4) for $i = 0$.
\\
We now turn to the recursive step in proving (4) and assume that
$A_{k} > \frac{nB_{k}}{\frac{n}{4} - k},\  {\rm for\  }\  0 < k < \frac{n}{4} - 2$.
We will show that
\begin{equation}
\frac{A_{k + 1}}{A_{k}} \geq \left(\frac{\frac{n}{4} - k}{\frac{n}{4} - k -1}\right) \frac{B_{k + 1}}{B_{k}}
\end{equation}
This will suffice because (7) together with the recursive hypothesis implies that
$A_{k+1} \geq \left(\frac{\frac{n}{4} - k}{\frac{n}{4} - k -1}\right) \frac{A_{k}}{B_{k}}B_{k+1}
> \left(\frac{\frac{n}{4} - k}{\frac{n}{4} - k -1}\right) \frac{n}{\frac{n}{4} - k}B_{k+1} = \frac{n}{\frac{n}{4} - k - 1}B_{k+1}$.
We have that
\[ \frac{A_{k+1}}{A_{k}} = \frac{{\delta \choose \frac{\delta}{2} + k + 1}{n - \delta - 1 \choose
\frac{n}{2} - \frac{\delta}{2} - k - 1}}
{{\delta \choose \frac{\delta}{2} + k }{n - \delta - 1 \choose
\frac{n}{2} - \frac{\delta}{2} - k}} = \frac{\left(\frac{\delta}{2} - k\right)\left(\frac{n}{2} - \frac{\delta}{2} - k\right)}
{\left(\frac{\delta}{2} + k + 1\right)\left(\frac{n}{2} - \frac{\delta}{2} + k\right)},\]
\[ {\rm and}, \frac{B_{k+1}}{B_{k}} = \frac{{\delta \choose \frac{n}{4} - k - 1}{n - \delta - 1 \choose
\frac{n}{4} + k + 1}}
{{\delta \choose \frac{n}{4} - k }{n - \delta - 1 \choose
\frac{n}{4} + k}} = \frac{\left(\frac{n}{4} - k\right)\left(\frac{3n}{4} - \delta - k - 1\right)}
{\left(\delta - \frac{n}{4} + k + 1\right)\left(\frac{n}{4} + k + 1\right)}.\]
Hence, letting $\delta = \frac{n}{2} + s$, we have that
\begin{eqnarray}
\frac{\left(\frac{A_{k+1}}{A_{k}}\right)}{\left(\frac{B_{k+1}}{B_{k}}\right)}& = &
\frac{\left(\frac{\delta}{2} - k\right)\left(\frac{n}{2} - \frac{\delta}{2} - k\right)\left(\delta - \frac{n}{4} + k + 1\right)\left(\frac{n}{4} + k + 1\right)}
{\left(\frac{n}{4} - k\right)\left(\frac{3n}{4} - \delta - k - 1\right)\left(\frac{\delta}{2} + k + 1\right)\left(\frac{n}{2} - \frac{\delta}{2} + k\right)}\nonumber \\
& = & \frac{\left(\frac{n}{4} + \frac{s}{2} - k\right)\left(\frac{n}{4} - \frac{s}{2} - k\right)\left(\frac{n}{4} + s + k + 1\right)
\left(\frac{n}{4} + k + 1\right)}
{\left(\frac{n}{4} - k\right)\left(\frac{n}{4} - s - k - 1\right)\left(\frac{n}{4} + \frac{s}{2} + k + 1\right)
\left(\frac{n}{4} - \frac{s}{2} + k \right)}
\end{eqnarray}
Note that in equation (8) we have, $\frac{\left(\frac{n}{4} + \frac{s}{2} - k\right)}{\left(\frac{n}{4} - k\right)} > 1$,
$\frac{\left(\frac{n}{4} + s + k + 1\right)} {\left(\frac{n}{4} + \frac{s}{2} + k + 1\right)} > 1$,
$\frac{\left(\frac{n}{4} + k + 1\right)} {\left(\frac{n}{4} - \frac{s}{2} + k \right)} > 1$,
and in addition because $k < \frac{n}{4}$, it is easy to verify that
$\frac{\left(\frac{n}{4} - \frac{s}{2} - k\right)} {\left(\frac{n}{4} - s - k - 1\right)}
> \frac{\left(\frac{n}{4} - k\right)}
{\left(\frac{n}{4} - k - 1\right)}$. Now (8) implies (7) which in turn proves (4).
This completes the proof of Case 1.
\\
\\
\noindent
{\bf Case 2}: $n = 4m$ and $\delta = 2j + 1$ for some positive integers $m$ and $j$.
\\
For $i = 0,1,\ldots,\frac{n}{4} - 2$, let $A_{i} = n_{(j + i)} = 2{\delta \choose j + i}{n - \delta - 1 \choose 2m - j - i}$,
and as in Case 1, let $B_{i} = n_{(\frac{n}{4}-i)} = 2{\delta \choose m - i}{n - \delta - 1 \choose m + i}$.
As in Case 1, we prove by recursion on $i$ that inequality (4) is satisfied for $A_{i}$ and $B_{i}$ defined here.
Towards this end, let $\delta = \frac{n}{2} + s$ where $s$ is odd. We have that,
\begin{eqnarray}
\frac{A_{0}}{B_{0}}& = &\frac{(\frac{n}{4})!(\delta - \frac{n}{4})!(\frac{n}{4})!(\frac{3n}{4} - \delta - 1)!}
{j!(\delta - j)!(\frac{n}{2}-j)!(\frac{n}{2}-\delta + j - 1)!} \nonumber \\
&=& \frac{(\frac{n}{4})!(\frac{n}{4} + s)!(\frac{n}{4})!(\frac{n}{4} - s -1)!}{(\frac{n}{4} + \frac{s}{2}-\frac{1}{2})!(\frac{n}{4} + \frac{s}{2} +\frac{1}{2})!
(\frac{n}{4} - \frac{s}{2} + \frac{1}{2})!
(\frac{n}{4} - \frac{s}{2} - \frac{3}{2})!} \nonumber \\
&=&\frac{(\frac{n}{4} + s)(\frac{n}{4} + s - 1)\ldots(\frac{n}{4} + \frac{s}{2} + \frac{3}{2})(\frac{n}{4})(\frac{n}{4} - 1)\ldots(\frac{n}{4} - \frac{s}{2} + \frac{3}{2})}{(\frac{n}{4} + \frac{s}{2} - \frac{1}{2})(\frac{n}{4} + \frac{s}{2} -\frac{3}{2})\ldots (\frac{n}{4} + 1)(\frac{n}{4} - \frac{s}{2} - \frac{3}{2})(\frac{n}{4} - \frac{s}{2} - \frac{5}{2})\ldots(\frac{n}{4} - s)}\nonumber \\
& \geq & \frac{(\frac{n}{4} + s)(\frac{n}{4} + s - 1)\ldots(\frac{n}{4} + \frac{s}{2} + \frac{3}{2})(\frac{n}{4} - 1)\ldots(\frac{n}{4} - \frac{s}{2} + \frac{3}{2})}{(\frac{n}{4} + \frac{s}{2} - \frac{1}{2})(\frac{n}{4} + \frac{s}{2} -\frac{3}{2})\ldots (\frac{n}{4} + 1)(\frac{n}{4} - \frac{s}{2} - \frac{3}{2})\ldots(\frac{n}{4} - s + 1)}\frac{\frac{n}{4}}{(\frac{n}{4} - s)} \nonumber
\end{eqnarray}
Now, applications of Lemma 1 give
\begin{eqnarray}
\frac{A_{0}}{B_{0}}& \geq &\frac{{(\frac{n}{4} + \frac{3s}{4} + \frac{3}{4})}^{\left(\frac{s}{2}-\frac{1}{2}\right)}}
{{(\frac{n}{4} + \frac{s}{4} + \frac{1}{4})}^{\left(\frac{s}{2}-\frac{1}{2}\right)}}
\frac{{(\frac{n}{4} - \frac{s}{4} + \frac{1}{4})}^{\left(\frac{s}{2}-\frac{1}{2}\right)}}
{{(\frac{n}{4} - \frac{3s}{4} - \frac{1}{4})}^{\left(\frac{s}{2}-\frac{1}{2}\right)}}\frac{\frac{n}{4}}{(\frac{n}{4} - s)}\nonumber \\
& \geq & \frac{{(\frac{n}{4} + \frac{s}{4} + \frac{1}{2})}^{s-1}}
{{(\frac{n}{4} - \frac{s}{4})}^{s-1}}\frac{\frac{n}{4}}{(\frac{n}{4} - s)}\nonumber \\
& \geq & \frac{{(\frac{n}{4} + \frac{s}{4} + \frac{1}{2})}^{s}}
{{(\frac{n}{4} - \frac{s}{4})}^{s}} \nonumber
\end{eqnarray}
This is exactly inequality (5) obtained in proving Case 1. The rest of the proof for Case 2 is similar to that of Case 1
and we omit it.
\\
\\
\noindent
{\bf Case 3}: $n \equiv 2\pmod{4}$.
\\
In this case we point out that a proof similar to that in cases 1 and 2 above verifies the result.
\\
\\
Remark: We argue that there was no loss of generality in our assumption at the beginning of the proof of Theorem 7 that $d^{+}(v) = d^{-}(v) = \delta(D)$ for each $v \in V(D)$.
Let $D^{*} = (V^{*}, A(D^{*})$ be a directed graph with $d^{+}(v) \geq \delta(D^{*})$, and  $d^{-}(v) \geq \delta(D^{*})$ for each $v \in V(D^{*})$.
 Let $v \in V(D^{*})$, and, let $n_{k}^{*}$ denote the number of equipartitions of $V(D^{*})$ into $V(D^{*}) = X \cup Y$
for which ${\rm deg}(v,B(X,Y)) = k$. We can delete some arcs pointed into $v$ and some arcs pointed out of $v$ to get a directed graph
$D = (V^{*},A(D))$ in which $d^{+}(v) = d^{-}(v) = \delta(D^{*})$. Now as before let $n_{k}$ denote the number of equipartitions of $V(D)$ into $V(D) = X \cup Y$
for which ${\rm deg}(v,B(X,Y)) = k$. It is clear that $\sum_{k = 2}^{q}n_{k} \geq \sum_{k = 2}^{q}n_{k}^{*}$ for each $q$,
and that $ \sum_{k = \delta - \frac{n}{2} +1}^{\frac{n}{2}} n_{k} = \sum_{k = \delta - \frac{n}{2} +1}^{\frac{n}{2}} {n_{k}}^{*}$ = total
number of equipartitions of $V(D^{*})$. Hence, the proof above that $T > S$ holds with $n_{k}$ replaced by $n_{k}^{*}$.
\end{mainproof}
\\
\\
We now prove the corollaries of Theorem 7 mentioned in the introduction.
\\
\\
\noindent
\begin{cor1proof}
 If $n \leq 10$ then $\delta(D) > \frac{2}{3}n$ and Theorem 6 implies that
$D$ has an anti-directed Hamilton cycle. Hence, assume that $n > 10$, and for given $n$,
let $p$ be the unique real number such that
$\frac{1}{2} < p < \frac{3}{4}$ and $n = \frac{{\rm ln}(4)}{\left(p - \frac{1}{2}\right){\rm ln}\left(\frac{p + \frac{1}{2}}{\frac{3}{2} - p}\right)}$. The result follows from Theorem 7 if $\delta(D) > pn$ and since  $\delta(D) > \frac{1}{2}n  + \sqrt{n\ln(2)}$, it suffices to show that
$pn \leq \frac{1}{2}n  + \sqrt{n\ln(2)}$. Let $x = p - \frac{1}{2}$ and note that $0 < x < \frac{1}{4}$.
Now, $pn \leq \frac{1}{2}n  + \sqrt{n\ln(2)}$ if and only if $xn \leq \sqrt{n\ln(2)}$ if and only if
$\sqrt{\frac{\ln(4)}{x\ln\left(\frac{1 + x}{1 - x} \right)}} \leq \frac{\sqrt{\ln(2)}}{x}$ if and only if $2x \leq \ln(1+x) - \ln(1-x)$.
 Since $0 < x < \frac{1}{4}$,
we have that $\ln(1+x) - \ln(1-x) = \sum_{k = 0}^{\infty}\frac{2x^{2k + 1}}{2k + 1}$ and this completes the proof of Corollary 1.
\end{cor1proof}
\\
\\
\noindent
\begin{cor2proof} For $p = \frac{9}{16}$, $177 < \frac{{\rm ln}(4)}{\left(p - \frac{1}{2}\right){\rm ln}\left(\frac{p + \frac{1}{2}}{\frac{3}{2} - p}\right)} < 178$. Hence, Theorem 7 implies that the corollary is true for all $n \geq 178$. If $n < 178$, $\delta(D) > \frac{9}{16}n$, and,
$n \not\equiv 0\pmod{4}$, we can
verify that inequality (3) is satisfied by direct computation. If $n < 178$, $\delta(D) > \frac{9}{16}n$, and,
$n \equiv 0\pmod{4}$, a use of Theorem 8 that is stronger than its use in deriving the bound $S$ in equation (2) yields that
the number of equipartitions of $V(D)$ into $V(D) = X \cup Y$ for which
$B(X,Y)$ does not contain a Hamilton cycle is at most
\begin{equation}
S' = n\left(\frac{n_{2}}{2} + \frac{n_{3}}{3} + \ldots + \frac{n_{\lfloor \frac{n}{4}\rfloor}}{2\lfloor\frac{n}{4}\rfloor}\right).
\end{equation}
Direct computation now verifies that $T > S'$.
\end{cor2proof}
\\
\\
\noindent
\begin{cor3proof} If $n \leq 14$ is even and $\delta(D) > \frac{1}{2}n$ then we have that $\delta(D) > \frac{9}{16}n$
and Corollary 2 implies Corollary 3.
\end{cor3proof}

\end{document}